\newtheorem{teononum}{Theorem}
\newtheorem{theorem}{Theorem}[section]
\newtheorem{lemma}[theorem]{Lemma}
\newtheorem{corollary}[theorem]{Corollary}
\newtheorem{corol}{Corollary}
\theoremstyle{definition}
\newtheorem{definition}[theorem]{Definition}
\theoremstyle{remark}
\newtheorem{remark}[theorem]{Remark}
\newcommand\Qbar{\overline{\mathbb{Q}}}
\newcommand\IQ{\mathbb{Q}}
\numberwithin{equation}{section}
\newcommand{\subjclass}[2][2010]{%
  \let\@oldtitle\@title%
  \gdef\@title{\@oldtitle\footnotetext{#1 \emph{Mathematics Subject Classification.} #2}}%
}
\newcommand{\keywords}[1]{%
  \let\@@oldtitle\@title%
  \gdef\@title{\@@oldtitle\footnotetext{\emph{Key words and phrases.} #1.}}%
}
\begin{document}

\title{On the Northcott property and other properties related to polynomial mappings}

\author{Sara Checcoli\protect\footnote{Address: Institute of Mathematics,
  University of Basel,
  Rheinsprung 21,
  CH-4051 Basel,
  Switzerland. E-mail address: $\mathtt{sara.checcoli@unibas.ch}$ }\hspace{1.5mm}
  and Martin Widmer\protect\footnote{Address: Department for Analysis and Computational Number Theory, 
  Graz University of Technology,
  Steyrergasse 30/II,
  8010 Graz, Austria. E-mail address: $\mathtt{widmer@math.tugraz.at}$}
 }

\date{}

\subjclass[2010]{Primary 11G50, 11R04; Secondary 37P05}

\maketitle
\begin{abstract}
We prove that if $K/\IQ$ is a Galois extension of finite exponent and $K^{(d)}$ is the compositum of all extensions of $K$ of degree at most $d$, 
then $K^{(d)}$ has the Bogomolov property and the maximal abelian subextension of $K^{(d)}/\IQ$ has the Northcott property.

Moreover, we prove that given any sequence of finite solvable groups $\{G_m\}_m$ there  exists a sequence of Galois extensions $\{K_m\}_m$ with 
$\text{Gal}(K_m/\IQ)=G_m$ such that the compositum of the fields $K_m$ has the Northcott property. 
In particular we provide examples of fields with the Northcott property with uniformly bounded local degrees but not contained in $\IQ^{(d)}$.

We also discuss some problems related to properties introduced by Liardet and Narkiewicz to study polynomial mappings. Using results on the 
Northcott property and a result by Dvornicich and Zannier we easily deduce answers to some open problems
proposed by Narkiewicz.
\end{abstract}

\section{Introduction}
A subset $A$ of the algebraic numbers $\Qbar$ is said to have the \emph{Northcott property} or \emph{property (N)} if the set 
$$A(T):=\{x\in A | h(x)<T\}$$
is finite for every positive real number $T$ where $h$ denotes the absolute logarithmic Weil height on $\Qbar$ as defined in \cite{BoGu}
or \cite{Lang}.

For a subfield $K$ of $\Qbar$ and a positive integer $d$ we write
$K^{(d)}$ for the composite field of all extensions of $K$ in $\Qbar$ of degree at most $d$ over $K$. Moreover, we write $K_{\text{ab}}$
for the maximal abelian subextension of $K/\IQ$.

By a well-known theorem of Northcott (see Theorem 1 in \cite{Northcott}) any set $A\subseteq \Qbar$ of elements of uniformly bounded degree over $\IQ$
(i.e., there exists a constant $D$
such that $[\IQ(a):\IQ]\leq D$ for all $a\in A$) has property (N). In particular any number field
has property (N). 
Property (N) for certain infinite extensions of $\IQ$ has first been studied by Bombieri and Zannier in \cite{Zan}. 
In particular
they proposed the following question:
\begin{enumerate}[]
\item \textbf{Question 1} (Bombieri-Zannier): 
Does the field $\IQ^{(d)}$ have property (N)?
\end{enumerate}
For $d>2$ this is an open problem while for $d=2$ the answer is yes.  More generally 
Bombieri and Zannier showed
that the maximal abelian subextension of $K^{(d)}/K$ has property (N) for arbitrary number fields $K$. In particular, for $K=\IQ$, we have the following theorem.
\begin{teononum}[Bombieri-Zannier, 2001]\label{BZ}
Property (N) holds for ${\IQ^{(d)}}_{\text{ab}}$, for any $d$.
\end{teononum}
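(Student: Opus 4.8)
The plan is to make $\IQ^{(d)}_{\mathrm{ab}}$ explicit, dispose of its torsion, reduce Northcott's finiteness to a bound on degrees, and then try to prove that bound; the last step is where the substance lies.

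First I would identify the field. An abelian extension $F/\IQ$ lies in $\IQ^{(d)}$ if and only if $\mathrm{Gal}(F/\IQ)$ is a quotient of a product of cyclic groups of order at most $d$, which (looking at primary components) holds if and only if $\exp\mathrm{Gal}(F/\IQ)$ divides $N:=\operatorname{lcm}(1,2,\dots,d)$. Hence $\IQ^{(d)}_{\mathrm{ab}}$ is the maximal abelian extension of $\IQ$ of exponent dividing $N$, i.e. the subfield of $\IQ(\zeta_\infty)$ fixed by the $N$-th powers in $\mathrm{Gal}(\IQ(\zeta_\infty)/\IQ)\cong\widehat{\IZ}^{*}$. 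Two consequences matter. First, since $\mathrm{Gal}(\IQ_p^{\mathrm{ab}}/\IQ_p)\cong\widehat{\IZ}\times\IZ_p^{*}$ has a \emph{finite} maximal quotient of exponent $N$, the field $\IQ^{(d)}_{\mathrm{ab}}$ has uniformly bounded local degrees: $[(\IQ^{(d)}_{\mathrm{ab}})_w:\IQ_p]\le M(d)$ for every prime $p$ and place $w\mid p$, and $\le 2$ at the archimedean place. Second, $\zeta_m\in\IQ^{(d)}_{\mathrm{ab}}$ exactly when $(\IZ/m\IZ)^{*}$ has exponent dividing $N$, which holds for only finitely many $m$; so $\IQ^{(d)}_{\mathrm{ab}}$ contains only finitely many roots of unity.

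By the second remark the roots of unity together with $0$ contribute only finitely many elements to any $\IQ^{(d)}_{\mathrm{ab}}(T)$, so it suffices to show that the non-torsion $\alpha\in\IQ^{(d)}_{\mathrm{ab}}$ with $h(\alpha)<T$ lie in finitely many number fields, i.e. to bound $n:=[\IQ(\alpha):\IQ]$ in terms of $d$ and $T$: Northcott's classical theorem (Theorem~1 in \cite{Northcott}) then gives finiteness, and such a bound is in any case necessary for property (N). Fix such an $\alpha$ (one may assume it an algebraic integer), put $F=\IQ(\alpha)$, and let $\mathfrak f$ be the conductor of $F$. Since $F$ is abelian of exponent $\le N$, every prime $p\nmid N$ is at most tamely ramified in $F$ with inertia of order dividing $\gcd(p-1,N)\le N$; and because there is no nontrivial extension of $\IQ$ unramified at all places (Minkowski), $\mathrm{Gal}(F/\IQ)$ is generated by the inertia subgroups at the primes dividing $\mathfrak f$. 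Hence $n$ is at most a constant $C(d)$ times $N^{r}$, where $r$ is the number of ramified primes, so $r\ge c_1\log n$. Now the conductor–discriminant formula gives $v_p(d_F)\ge n/2$ at each ramified $p$, so $|d_F|$ is at least the product of the $r$ smallest primes raised to the power $n/2$, hence $|d_F|\ge n^{c_2 n}$; on the other hand Mahler's inequality gives $|d_F|\le|\operatorname{disc}(f_\alpha)|\le n^{n}e^{2n(n-1)h(\alpha)}<n^{n}e^{2n^{2}T}$, where $f_\alpha$ is the minimal polynomial of $\alpha$. The hope is that confronting these two estimates bounds $n$.

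I expect that confrontation to be the main obstacle, because it is lossy: the factor $e^{2n^{2}h(\alpha)}$ overwhelms $n^{c_2 n}$ as soon as $h(\alpha)\gtrsim 1/n$, so by itself the argument only yields a Dobrowolski-type inequality $h(\alpha)\gg(\log n\log\log n)/n$, not a bound on $n$. (Note that the Bogomolov property is free here — being abelian, $\IQ^{(d)}_{\mathrm{ab}}$ inherits the Amoroso--Dvornicich bound $h(\alpha)\ge c>0$ — but the Bogomolov property does not imply (N): $\IQ^{\mathrm{ab}}$ has it yet contains the infinitely many $\zeta_p+1$ of height $\le\log 2$, which escape $\IQ^{(d)}_{\mathrm{ab}}$ only because $\IQ(\zeta_p)$ has unbounded exponent. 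So the exponent hypothesis must be exploited beyond mere boundedness of local degrees — unsurprisingly, since for $\IQ^{(d)}$ itself, where the local degrees are likewise bounded, the Northcott property is precisely Bombieri--Zannier's Question~1 and is open.) The route I would pursue to close the gap is to sharpen the upper side via the Frobenius congruences: for a prime $q$ unramified in $F$ one has $\sigma_q(\alpha)\equiv\alpha^{q}\pmod{\mathfrak q}$ for every $\mathfrak q\mid q$, and since $\mathrm{Frob}_q$ has order dividing $N$ this iterates to $\alpha^{q^{N}}\equiv\alpha\pmod{qO_F}$; feeding in, over a suitable range of such $q$, the information on precisely which primes are forced to be ramified in $F$ (each ramified prime pinning down a fixed cyclic subfield of $\IQ^{(d)}_{\mathrm{ab}}$ of degree $\le C(d)$) should bound $n$. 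Reconciling ``$F$ is ramified at many primes'' with ``$h(\alpha)$ is small'' through this bookkeeping is the delicate point I would expect to cost the most.
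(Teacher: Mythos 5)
This statement is not proved in the paper at all: it is quoted from Bombieri--Zannier \cite{Zan}, so the comparison is with their proof. Your reduction steps are fine (the identification of $\IQ^{(d)}_{\text{ab}}$ as the maximal abelian extension of exponent dividing $\mathrm{lcm}(1,\dots,d)$ is correct, since $\text{Gal}(\IQ^{(d)}/\IQ)$ embeds in a product of subgroups of $S_d$, and every prime power dividing that lcm is at most $d$; and reducing property (N) to a bound on $[\IQ(\alpha):\IQ]$ in terms of $d$ and $T$ is legitimate). But the proof is not complete, and you say so yourself: the confrontation of the conductor--discriminant lower bound $|d_F|\ge n^{c_2 n}$ with Mahler's upper bound $|d_F|\le n^{n}e^{2n(n-1)h(\alpha)}$ only yields a Lehmer/Dobrowolski-type lower bound on $h(\alpha)$, because every ramified prime's contribution to $\log|d_F|$ is measured against a height term that carries an extra factor of $n$. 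That is precisely the step where the substance lies, and the proposed repair via the congruences $\alpha^{q^{N}}\equiv\alpha \pmod{q}$ at unramified primes is a programme, not an argument; nothing in the sketch explains how those congruences would produce a height lower bound growing with the number of ramified primes rather than being diluted by $1/n$.

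The missing idea in Bombieri--Zannier's actual proof is a \emph{local} estimate that exploits the uniformly bounded local degrees (equivalently the bounded ramification indices $e_p\le C(d)$) of $\IQ^{(d)}_{\text{ab}}$: each sufficiently large ramified prime $p$ forces a contribution to $h(\alpha)$ of size roughly $(\log p)/C'(d)$, with $C'(d)$ \emph{independent of} $n$, because the differences of conjugates of $\alpha$ lying in a single completion of bounded degree over $\IQ_p$ must absorb the local different. Since there are only finitely many abelian extensions of bounded exponent and bounded conductor, growing degree forces ramification at arbitrarily large primes, and the per-prime contributions then push $h(\alpha)$ above any prescribed $T$; this gives (N) directly, without ever passing through the global discriminant comparison that your draft correctly identifies as too lossy. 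Without a substitute for this local mechanism (your Frobenius bookkeeping would have to reproduce exactly this ``contribution per ramified prime, undivided by $n$'' feature), the proposal does not establish the theorem.
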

Property (N) for infinite extensions of $\IQ$ has also been studied by the second author in \cite{Widmer} where he
proved a simple but fairly robust criterion for a field to have property (N). We will use this criterion in Section \ref{proof_main}.

Recall that an algebraic extension $K/\IQ$ is said to have \emph{uniformly bounded local degrees} if there exists a positive integer $b$ 
such that, for every prime number $p$ and every valuation $v$ of $K$ above $p$, the completion $K_v$ is an extension of $\IQ_p$ of degree at most $b$.

A fact that is crucial in Bombieri and Zannier's proof of Theorem \ref{BZ} is the uniform boundedness of the local degrees of
${\IQ^{(d)}}_{\text{ab}}$. This is a consequence of the uniform boundedness of the local degrees of 
$\IQ^{(d)}$ (see Proposition 1 in \cite{Zan}). We shall prove that this property remains valid for $K^{(d)}$ whenever $K$ is a Galois extension of 
$\IQ$ with uniformly
bounded local degrees.  
Therefore it seems natural to address the following question: 
\begin{enumerate}[]
\item \textbf{Question 2}: If $K/\IQ$ is a Galois extension with uniformly bounded local degrees, does ${K^{(d)}}_{\text{ab}}$ have property (N)?
\end{enumerate}
At this point the reader should be warned: our definition of ${K^{(d)}}_{\text{ab}}$ 
coincides with Bombieri and Zannier's $K^{(d)}_{\text{ab}}$ from \cite{Zan}
if $K=\IQ$, but not in general.

In the recent paper \cite{Che} the first author gave the following characterization of Galois extensions of uniformly bounded local degrees. 
\begin{teononum}[Checcoli, 2011]\label{gal-exp} An algebraic Galois extension of the rationals has uniformly bounded local degrees if and only if its 
Galois group 
has finite exponent. 
\end{teononum}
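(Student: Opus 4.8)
The plan is to reformulate \emph{uniformly bounded local degrees} in terms of decomposition groups and then treat the two implications separately. Fix, for every rational prime $p$ and every place $v\mid p$ of $K$, a compatible embedding $K\hookrightarrow\overline{\IQ}$ into a fixed algebraic closure of $\IQ_p$. Since $K/\IQ$ is Galois, $K_v/\IQ_p$ is Galois, the decomposition group $D_v=\{\sigma\in G:\sigma v=v\}$ (where $G=\mathrm{Gal}(K/\IQ)$) is a closed subgroup of $G$, and there is a canonical topological isomorphism $D_v\cong\mathrm{Gal}(K_v/\IQ_p)$; in particular $[K_v:\IQ_p]=|D_v|$ when $D_v$ is finite, and $[K_v:\IQ_p]$ is infinite otherwise. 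Hence $K/\IQ$ has uniformly bounded local degrees if and only if all the groups $D_v$ are finite of uniformly bounded order.

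\emph{Finite exponent $\Rightarrow$ bounded local degrees.} Assume $\exp(G)=n<\infty$. For each $p$ and each $v\mid p$, the group $\mathrm{Gal}(K_v/\IQ_p)\cong D_v$ is a continuous quotient of the absolute Galois group $G_{\IQ_p}$ and has exponent dividing $n$, hence it is a quotient of the (unique) maximal continuous quotient $\Gamma_{p,n}$ of $G_{\IQ_p}$ of exponent dividing $n$. Here I would invoke two facts: first, $G_{\IQ_p}$ is topologically generated by at most $C$ elements, with $C$ an absolute constant independent of $p$ (this is classical; $C=4$ works for all $p$); second, the positive solution of the restricted Burnside problem (Zelmanov), which provides a bound $f(C,n)$ on the order of any finite $C$-generated group of exponent dividing $n$. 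Every finite continuous quotient of $\Gamma_{p,n}$ is $C$-generated of exponent dividing $n$, hence has order at most $f(C,n)$; being the inverse limit of these quotients, $\Gamma_{p,n}$ is itself finite of order at most $f(C,n)$. Consequently $[K_v:\IQ_p]=|\mathrm{Gal}(K_v/\IQ_p)|\le f(C,n)=:b$ for all $p$ and all $v\mid p$, so $K/\IQ$ has uniformly bounded local degrees.

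\emph{Bounded local degrees $\Rightarrow$ finite exponent.} I argue by contraposition: supposing $\exp(G)=\infty$, I exhibit places of $K$ of arbitrarily large local degree. For a finite Galois extension $F/\IQ$ let $e(F)$ denote the largest order of an element of $\mathrm{Gal}(F/\IQ)$. If $e(F)\le E$ for every finite Galois $F\subseteq K$, then every element of every finite quotient of $G$ has order dividing $E!$, forcing $\exp(G)\mid E!$, a contradiction; hence $\sup_F e(F)=\infty$. Given an integer $m$, choose a finite Galois $F\subseteq K$ and $\sigma\in\mathrm{Gal}(F/\IQ)$ with $\mathrm{ord}(\sigma)=e(F)\ge m$. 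By the Chebotarev density theorem there is a rational prime $p$, unramified in $F$, whose Frobenius conjugacy class is that of $\sigma$; picking a place $v\mid p$ of $F$ with Frobenius equal to $\sigma$, the decomposition group of $v$ is $\langle\sigma\rangle$, so $[F_v:\IQ_p]=\mathrm{ord}(\sigma)\ge m$. Extending $v$ to a place $w$ of $K$ gives $[K_w:\IQ_p]\ge[F_v:\IQ_p]\ge m$. Since $m$ is arbitrary, the local degrees of $K/\IQ$ are unbounded.

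The decomposition-group dictionary of the first paragraph and the Chebotarev-plus-bookkeeping argument of the third are routine. The genuine obstacle is the second paragraph: passing from \emph{bounded exponent} to \emph{bounded order} for the local Galois groups requires the deep positive solution of the restricted Burnside problem, applied to quotients of $G_{\IQ_p}$ — which is legitimate precisely because $G_{\IQ_p}$ can be topologically generated by a number of elements bounded uniformly in $p$.
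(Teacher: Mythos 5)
This theorem is not proved in the paper at all: it is quoted as Theorem 1.2 of the cited reference [Che] (Checcoli, 2011), so your argument can only be judged on its own terms, and on those terms it is correct. The Chebotarev direction (unbounded exponent implies unbounded local degrees via Frobenius elements of large order generating cyclic decomposition groups at unramified primes) is the standard argument and is carried out correctly, including the reduction from ``finite quotients have bounded element orders'' to ``$\exp(G)$ finite''. The converse direction is also sound as written: $\mathrm{Gal}(K_v/\IQ_p)$ is a continuous quotient of $G_{\IQ_p}$ of exponent dividing $n$, the group $G_{\IQ_p}$ is topologically generated by $4$ elements for every $p$ (Jannsen--Wingberg; the generation statement holds at $p=2$ as well), and the positive solution of the restricted Burnside problem then bounds all finite quotients of the maximal exponent-$n$ quotient, hence the quotient itself. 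You correctly identify this as the genuinely deep input.

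One comparative remark. You can substantially lighten the reliance on the uniform-in-$p$ generation of $G_{\IQ_p}$ by first splitting the local group along the unramified/tame/wild filtration: if $\exp(G)=n$, then for every $p>n$ the wild inertia of $\mathrm{Gal}(K_v/\IQ_p)$ is a pro-$p$ group of exponent dividing $n$, hence trivial, and the group is an extension of a cyclic group of order dividing $n$ by a cyclic group of order dividing $n$, so the local degree is at most $n^2$ with no deep input at all. Only the finitely many primes $p\le n$ then require the finiteness of the maximal exponent-$n$ extension of $\IQ_p$, for which topological finite generation of $G_{\IQ_p}$ (any bound, even depending on $p$) together with Zelmanov's theorem suffices, exactly as in your second paragraph; this is essentially how the result is established in the cited literature. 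Your version, applying the uniform bound $f(4,n)$ at every prime, is perfectly valid, just heavier than necessary at the tame primes. A final cosmetic point: in your first paragraph the embedding should be of $\Qbar$ (or of $K$) into $\overline{\IQ_p}$, and $K_v$ should be read as the topological closure $K\cdot\IQ_p$ of $K$ in $\overline{\IQ_p}$, which is the convention under which $[K_v:\IQ_p]=|D_v|$ holds; this is how local degrees are meant here, so no mathematical change is needed.
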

Here we prove the following theorem which, due to Theorem \ref{gal-exp}, gives in particular an affirmative answer to Question 2.
\begin{teononum}\label{general}
Let $K$ be an algebraic extension of $\IQ$. If $K/\IQ$ is Galois then $K^{(d)}/\IQ$ is also Galois.

If moreover $\text{Gal}(K/\IQ)$ has finite exponent $b$, then $\text{Gal}(K^{(d)}/\IQ)$ has exponent at most $d! b$ and 
${K^{(d)}}_{\text{ab}}$ has property (N).
\end{teononum}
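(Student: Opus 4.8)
The plan is to prove the three assertions in turn, reducing the last one — property (N) for ${K^{(d)}}_{\text{ab}}$ — to the theorem of Bombieri and Zannier (Theorem~\ref{BZ}). \emph{Normality.} First one checks that $K^{(d)}/K$ is Galois: separability is automatic in characteristic $0$, and $K^{(d)}/K$ is normal because a $K$-conjugate of an element of degree at most $d$ over $K$ again has degree at most $d$ over $K$, hence already lies in $K^{(d)}$. For $K^{(d)}/\IQ$ I would argue that $K^{(d)}$ is a splitting field over $\IQ$: since $K/\IQ$ is normal it is the splitting field of some family of polynomials, and adjoining to this family the minimal polynomials over $\IQ$ of all elements of degree at most $d$ over $K$ yields a family whose splitting field is exactly $K^{(d)}$. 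The one point to verify is that if $\deg_K(\alpha)\le d$ and $\tau$ is any $\IQ$-embedding of $\Qbar$, then $\deg_K(\tau\alpha)\le d$ as well; this follows from $\tau(K)=K$ (normality of $K/\IQ$), which gives $[K(\tau\alpha):K]=[\tau(K(\alpha)):\tau(K)]=[K(\alpha):K]$.

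\emph{The exponent bound.} Let $L$ range over the extensions of $K$ with $[L:K]\le d$ and let $\widetilde L$ be the Galois closure of $L$ over $K$. Restriction embeds $\text{Gal}(K^{(d)}/K)$ into $\prod_L\text{Gal}(\widetilde L/K)$, and each $\text{Gal}(\widetilde L/K)$ is isomorphic to a subgroup of $S_d$ via its action on the conjugates of a primitive element of $L/K$; hence every element of $\text{Gal}(K^{(d)}/K)$ has order dividing $|S_d|=d!$. Combining this with $\exp(\text{Gal}(K/\IQ))=b$ through the exact sequence $1\to\text{Gal}(K^{(d)}/K)\to\text{Gal}(K^{(d)}/\IQ)\to\text{Gal}(K/\IQ)\to 1$ and the elementary fact that $\exp(G)$ divides $\exp(N)\cdot\exp(G/N)$ whenever $N\trianglelefteq G$, one gets that $\text{Gal}(K^{(d)}/\IQ)$ has exponent dividing $d!\,b$, in particular at most $d!\,b$.

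\emph{Property (N).} Put $e=d!\,b$. Then $\text{Gal}({K^{(d)}}_{\text{ab}}/\IQ)$ is a quotient of $\text{Gal}(K^{(d)}/\IQ)$, hence an abelian profinite group of exponent dividing $e$. By the structure theorem for abelian groups of bounded exponent (Pr\"ufer), applied to its Pontryagin dual, this group is isomorphic to a product $\prod_i C_i$ of finite cyclic groups $C_i$ of order dividing $e$; taking fixed fields of the complementary subproducts expresses ${K^{(d)}}_{\text{ab}}$ as the compositum of cyclic extensions $L_i/\IQ$ with $[L_i:\IQ]\le e$. Each $L_i$ thus lies inside $\IQ^{(e)}$, so their compositum ${K^{(d)}}_{\text{ab}}$ lies in $\IQ^{(e)}$, and being abelian over $\IQ$ it is contained in ${\IQ^{(e)}}_{\text{ab}}$. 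By Theorem~\ref{BZ} the field ${\IQ^{(e)}}_{\text{ab}}$ has property (N), and property (N) trivially passes to subsets, so ${K^{(d)}}_{\text{ab}}$ has property (N).

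\emph{On the difficulty, and an alternative.} The first two steps are routine; the content is in the third. The delicate points there are the structure statement for profinite abelian groups of finite exponent and its translation, via the Galois correspondence, into a compositum of small cyclic number fields. If one prefers not to invoke Theorem~\ref{BZ}, there is a parallel route using the tools stressed in the introduction: by Theorem~\ref{gal-exp} the finiteness of the exponent of $\text{Gal}(K^{(d)}/\IQ)$ forces $K^{(d)}$, hence also its subfield ${K^{(d)}}_{\text{ab}}$, to have uniformly bounded local degrees, after which one verifies Widmer's criterion from \cite{Widmer} for an appropriate filtration of ${K^{(d)}}_{\text{ab}}$ by number fields; here the main work shifts to controlling, along that filtration, the relative degrees and the heights of elements not lying in the previous stage — which is precisely where the bounded-local-degree hypothesis is used.
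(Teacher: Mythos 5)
Your proof is correct, and while the first two parts coincide in substance with the paper's argument (the paper shows normality of $K^{(d)}/\IQ$ by conjugating minimal polynomials of degree at most $d$ over $K$ -- your splitting-field formulation with the verification $\tau(K)=K\Rightarrow\deg_K(\tau\alpha)=\deg_K(\alpha)$ is the same idea, and your exponent bound via $\sigma^b\in\text{Gal}(K^{(d)}/K)$ and $\exp\text{Gal}(K^{(d)}/K)\mid d!$ is exactly the paper's, with the bonus that you actually justify the $d!$ bound which the paper only asserts), the third part takes a genuinely different route. The paper deduces property (N) for ${K^{(d)}}_{\text{ab}}$ by citing two external results: Theorem 1.2 of \cite{Che} (finite exponent implies uniformly bounded local degrees) and Proposition 2.1 of \cite{Che_Zan} (an abelian extension with uniformly bounded local degrees lies in some $\IQ^{(m)}$), and only then invokes Theorem \ref{BZ}. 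You instead argue directly that an abelian profinite Galois group of exponent dividing $e=d!\,b$ is a product of cyclic groups of order dividing $e$ (Pr\"ufer via Pontryagin duality), so ${K^{(d)}}_{\text{ab}}$ is a compositum of cyclic extensions of degree at most $e$ and hence sits inside ${\IQ^{(e)}}_{\text{ab}}$, after which Theorem \ref{BZ} finishes as in the paper. Your route is more self-contained and elementary, bypassing the local-degree machinery entirely; in fact you could even avoid the profinite structure theorem by noting that each element of ${K^{(d)}}_{\text{ab}}$ lies in a finite abelian subextension of exponent dividing $e$, which is a compositum of cyclic fields of degree at most $e$ by the finite structure theorem. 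What the paper's detour buys is an explicit statement about uniformly bounded local degrees of $K^{(d)}$, which it needs anyway for Corollary \ref{cor2} and for the surrounding discussion of Question 2, whereas your argument isolates exactly what is needed for property (N). Your closing sketch of an alternative via Widmer's criterion is not worked out and should not be counted as part of the proof, but the main argument does not rely on it.
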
 
Next let us consider the following ``stronger version'' of Question 1.
\begin{enumerate}[]
\item \textbf{Question 3}: Suppose $K$ is a subfield of $\Qbar$ with property (N). Does this imply that $K^{(d)}$ has property (N)
for any positive integer $d$?
\end{enumerate}
For a subfield $K$ of $\Qbar$ and positive integers $d$ and $n$ put $K^{(d)^{1}}=K^{(d)}$ and $K^{(d)^{n+1}}=(K^{(d)^{n}})^{(d)}$.
A positive answer to Question 3 would of course establish property (N) for ${\IQ}^{(d)^{n}}$. Unfortunately we don't even know
if ${\IQ}^{(2)^{2}}=({\IQ}^{(2)})^{(2)}$ has property (N). 
However, from Theorem \ref{general}
we easily deduce the following corollary.
Recall that  $({\IQ}^{(d)^{n}})_{\text{ab}}$ is the maximal abelian subextension of ${\IQ}^{(d)^{n}}/\IQ$.
\begin{corol}\label{cor1}
The field $({\IQ}^{(d)^{n}})_{\text{ab}}$ has property (N) for any positive integers $d$ and $n$.
\end{corol}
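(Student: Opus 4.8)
The plan is to prove this by a straightforward induction on $n$, invoking Theorem \ref{general} at each stage of the tower
$$\IQ\ \subseteq\ \IQ^{(d)}\ \subseteq\ \IQ^{(d)^{2}}\ \subseteq\ \cdots\ \subseteq\ \IQ^{(d)^{n}}.$$
The point to exploit is that Theorem \ref{general} does more than hand us property (N) for the abelian part: it also reproduces its own hypothesis, since it asserts that $\text{Gal}(K^{(d)}/\IQ)$ is again Galois of \emph{finite exponent}. Hence the theorem can be fed back into itself, and the only thing one must be disciplined about is carrying along the explicit exponent bound rather than merely recording that property (N) holds at stage $n$.

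For the base case $n=1$ I would take $K=\IQ$, which is (trivially) Galois over $\IQ$ with Galois group of exponent $1$. Theorem \ref{general} then gives that $\IQ^{(d)}/\IQ$ is Galois, that $\text{Gal}(\IQ^{(d)}/\IQ)$ has exponent at most $d!$, and that $(\IQ^{(d)})_{\text{ab}}=(\IQ^{(d)^{1}})_{\text{ab}}$ has property (N), which is the case $n=1$ of the corollary.

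For the inductive step, assume that $\IQ^{(d)^{n}}/\IQ$ is Galois and that $\text{Gal}(\IQ^{(d)^{n}}/\IQ)$ has finite exponent (by induction, at most $(d!)^{n}$). Since $\IQ^{(d)^{n}}$ is an algebraic extension of $\IQ$, I may apply Theorem \ref{general} with $K=\IQ^{(d)^{n}}$; recalling that $\IQ^{(d)^{n+1}}=(\IQ^{(d)^{n}})^{(d)}$ by definition, this yields that $\IQ^{(d)^{n+1}}/\IQ$ is Galois, that $\text{Gal}(\IQ^{(d)^{n+1}}/\IQ)$ has exponent at most $d!\cdot(d!)^{n}=(d!)^{n+1}$, and that $(\IQ^{(d)^{n+1}})_{\text{ab}}$ has property (N). This closes the induction and proves the corollary for all $d$ and $n$.

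Since every step is a direct application of Theorem \ref{general}, I do not expect any genuine obstacle; the whole content of the argument has already been absorbed into that theorem. The only subtlety worth flagging is the one the excerpt itself warns about: here $(\IQ^{(d)^{n}})_{\text{ab}}$ denotes the maximal abelian subextension of $\IQ^{(d)^{n}}/\IQ$, and this is exactly the object to which the conclusion of Theorem \ref{general} refers when applied with $K=\IQ^{(d)^{n-1}}$, so the notations match up with no adjustment needed.
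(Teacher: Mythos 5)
Your induction is correct: Theorem \ref{general} does regenerate its own hypothesis (Galois over $\IQ$ of finite exponent, with the explicit bound $d!\,b$), so applying it with $K=\IQ^{(d)^{n}}$ at each stage is legitimate even though these are infinite extensions, and a single application with $K=\IQ^{(d)^{n-1}}$ yields exactly the statement of the corollary, with the notational match of $\bigl(K^{(d)}\bigr)_{\text{ab}}$ and $\bigl(\IQ^{(d)^{n}}\bigr)_{\text{ab}}$ as you note. This is, however, a genuinely different route from the paper's. The paper does not iterate the theorem on the full fields $\IQ^{(d)^{i}}$; instead it first establishes the inclusion (\ref{incl}), namely that $\bigl(\IQ^{(d)^{n}}\bigr)_{\text{ab}}$ sits inside the alternating tower obtained by taking the abelian part after each application of the $(d)$-operation (justified by decomposing any finite abelian subextension of $\IQ^{(d)^{n}}/\IQ$ into a suitable tower), then applies Theorem \ref{d_ab} recursively starting from $K=\IQ$ to the successive \emph{abelian} fields, and finally uses that property (N) passes to subfields. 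Your argument dispenses with (\ref{incl}) and its tower justification entirely, at the mere cost of carrying the exponent bound $(d!)^{n}$ through the induction, and in that sense is more direct; the paper's version keeps every intermediate field abelian over $\IQ$ (so that only the ``abelian part'' clause of the theorem is exploited at each stage), but both proofs ultimately rest on the same Theorem \ref{general} together with Theorem \ref{BZ} behind it.
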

A weaker property than property (N), and also being discussed in \cite{Zan}, is the \emph{Bogomolov property}. We recall that a set of algebraic numbers $A$ 
is said to have the Bogomolov property if
there exists a positive real number $T_0$ such that the subset of non-zero elements of $A$ of height smaller than $T_0$ consists of all roots of unity 
in $A$. Here is another consequence of Theorem \ref{general}.
\begin{corol}\label{cor2}
Suppose $K/\IQ$ is an algebraic Galois extension of finite exponent.   
Then ${K}^{(d)^{n}}$ has the Bogomolov property for any positive integers $d$ and $n$.
\end{corol}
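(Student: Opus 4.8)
The plan is to combine an iteration of Theorem~\ref{general} with Theorem~\ref{gal-exp} and with the Bogomolov property for fields of uniformly bounded local degrees established by Bombieri and Zannier in \cite{Zan}. The first step is to prove, by induction on $n$, that $K^{(d)^{n}}/\IQ$ is Galois with $\text{Gal}(K^{(d)^{n}}/\IQ)$ of finite exponent. Let $b$ denote the exponent of $\text{Gal}(K/\IQ)$. For $n=1$ this is exactly Theorem~\ref{general}, which gives that $K^{(d)}/\IQ$ is Galois and that $\text{Gal}(K^{(d)}/\IQ)$ has exponent at most $d!\,b$. For the inductive step, note that $K^{(d)^{n+1}}=(K^{(d)^{n}})^{(d)}$ and that, by the inductive hypothesis, $K^{(d)^{n}}/\IQ$ is Galois of finite exponent; applying Theorem~\ref{general} with $K^{(d)^{n}}$ in the role of $K$ shows that $K^{(d)^{n+1}}/\IQ$ is Galois and that the exponent of its Galois group is at most $d!$ times that of $\text{Gal}(K^{(d)^{n}}/\IQ)$. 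Iterating, $\text{Gal}(K^{(d)^{n}}/\IQ)$ has finite exponent (at most $(d!)^{n}b$) for every $n$.

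Next I would apply Theorem~\ref{gal-exp} to the Galois extension $K^{(d)^{n}}/\IQ$, whose group has finite exponent, to conclude that $K^{(d)^{n}}$ has uniformly bounded local degrees. Finally I would invoke the result of Bombieri and Zannier underlying Theorem~\ref{BZ} (see \cite{Zan}), namely that any subfield of $\Qbar$ with uniformly bounded local degrees has the Bogomolov property; applying this to $K^{(d)^{n}}$ completes the proof.

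There is no serious obstacle here: the argument is essentially a bookkeeping of group exponents followed by two citations. The one point requiring care is that Theorem~\ref{general} can legitimately be reapplied at each stage, and this is exactly what it is set up to allow, since it outputs a field that is again Galois over $\IQ$ of finite exponent, so the hypotheses are reproduced at every step. The only other thing to check is that the Bogomolov statement quoted from \cite{Zan} is available for arbitrary fields of uniformly bounded local degrees, and not merely for ${\IQ^{(d)}}_{\text{ab}}$; this is the case because Bombieri and Zannier's proof uses nothing about that particular field beyond the uniform boundedness of its local degrees, which is precisely what the first two steps supply for $K^{(d)^{n}}$.
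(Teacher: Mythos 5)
Your proof is correct and follows essentially the same route as the paper: iterate Theorem \ref{general} to see that $K^{(d)^{n}}/\IQ$ is Galois with Galois group of finite exponent, pass to uniformly bounded local degrees via Theorem \ref{gal-exp}, and conclude with Bombieri--Zannier's Theorem 2 in \cite{Zan}. The only cosmetic difference is that the paper quotes that theorem for Galois extensions with finite local degrees at some prime (rather than the Galois-free form you state), but since your field is Galois over $\IQ$ this changes nothing.
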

Special cases of this result have already appeared, at least implicitly, in the literature.
For $K$ a number field and $n=1$ Corollary \ref{cor2} follows immediately from a result of Bombieri and Zannier (Theorem 2 in \cite{Zan}),
and if $d=1$ then it follows immediately from  Theorem \ref{gal-exp} combined with Bombieri and Zannier's result.\\

Let us get back to property (N). Although the uniform boundedness of the local degrees of $\IQ^{(d)}$ was crucial in the proof of Theorem \ref{BZ}
it is not a necessary condition for property (N), as was shown by the second author (see, e.g., Corollary 2 in \cite{Widmer}).
On the other hand it is an open question whether every field in $\Qbar$ with uniformly bounded local degrees has property (N).
Recently Zannier and the first author (Theorem 1.1 in \cite{Che_Zan}) proved the existence of subfields of $\Qbar$ with uniformly bounded local degrees
that are not contained in $\IQ^{(d)}$ for any $d$. Thus an affirmative answer to the aforementioned question would go beyond
a positive answer to Question 1. 
Now here is very modest question in this direction. 
\begin{enumerate}[]
\item \textbf{Question 4}:  Are there extensions of $\IQ$ with property (N) and with uniformly bounded local degrees but 
which are not contained in $\IQ^{(d)}$ 
for any positive integer $d$? 
\end{enumerate}
Another important ingredient in the proof of Theorem \ref{BZ} is that 
the finite subextensions of ${\IQ^{(d)}}_{\text{ab}}/\IQ$ can be generated by elements of uniformly bounded degree which is not true for $\IQ^{(d)}$, 
provided $d$ is large enough, as shown recently by the first author (Theorem 2 in \cite{Che}). 
Again this is not a necessary condition for property (N) (again see Corollary 2 in \cite{Widmer}). 
However, these examples are not contained in $\IQ^{(d)}$ for any $d$. So we propose a second modest question.  
\begin{enumerate}[]
\item \textbf{Question 5}: Are there subfields of $\IQ^{(d)}$ with property (N) whose subfields of finite degree (over $\IQ$) cannot 
be generated by elements of 
uniformly  bounded degree, for some positive integer $d$?
\end{enumerate}
Positive answers to the last two questions follow from the group-theoretical constructions made in \cite{Che_Zan} and the following theorem
which might have some independent interest. For subfields $K_1,\ldots, K_{m}$ of $\Qbar$ we write $K_1\cdots K_{m}$ for the compositum of $K_1,\ldots,K_{m}$ in $\Qbar$.
\begin{teononum}\label{main}
Let $\{G_m\}_m$ be a sequence of finite solvable groups. Then there exists a sequence $\{K_m\}_m$ of finite Galois extensions of $\IQ$ such that: 
\begin{itemize}
\item  $\text{Gal}(K_m/\IQ)=G_m$ for every $m$;
\item $K_1\cdots K_{m}\cap K_{m+1}=\IQ$ for every $m$; 
\item the compositum of the fields $K_m$ has the Northcott property.
\end{itemize}   
\end{teononum}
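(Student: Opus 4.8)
The plan is to construct the fields $K_m$ one at a time, using Shafarevich's theorem that every finite solvable group occurs as a Galois group over $\IQ$, but arranging at each step that $G_m$ is realized by a field ramified only at primes that are enormous compared with everything built so far. The disjointness of the ramification will force the intersections $K_1\cdots K_m\cap K_{m+1}$ to equal $\IQ$ (via Minkowski's theorem), and the growth of the ramified primes will force the final compositum to have property (N).

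In detail, I would argue by induction on $m$. Assume $K_1,\dots,K_{m-1}$ have been found, Galois over $\IQ$, with $\text{Gal}(K_j/\IQ)=G_j$ and with pairwise disjoint sets of ramified primes, and put $F_{m-1}=K_1\cdots K_{m-1}$; then, by induction, $F_{m-1}/\IQ$ is Galois with $\text{Gal}(F_{m-1}/\IQ)\cong G_1\times\dots\times G_{m-1}$, and $F_{m-1}$ is ramified only at some finite set $S$ of primes. Observe that the degree $N:=[F_{m-1}:\IQ]\cdot|G_m|$ of the field $F_m:=F_{m-1}K_m$ to be built is already determined. Now choose a bound $B_m$ exceeding every prime of $S$ and large in terms of $N$ and $m$ (concretely, $\log B_m\geq (m+1)N^{3}$ will do), and apply Shafarevich's theorem in the refined form in which all ramification can be prescribed to lie outside a given finite set of primes: there is a Galois extension $K_m/\IQ$ with $\text{Gal}(K_m/\IQ)=G_m$ all of whose ramified primes exceed $B_m$; call this (finite) set $S_m$. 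Set $F_m=F_{m-1}K_m$. Since $S\cap S_m=\emptyset$, the number field $F_{m-1}\cap K_m$ is unramified at every prime, hence equals $\IQ$ by Minkowski's theorem; consequently $\text{Gal}(F_m/\IQ)\cong\text{Gal}(F_{m-1}/\IQ)\times G_m$ and $[F_m:\IQ]=N$, completing the inductive step. The same argument gives $K_1\cdots K_m\cap K_{m+1}=F_m\cap K_{m+1}=\IQ$ for every $m$, so the first two assertions hold.

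It remains to prove that $L:=\bigcup_m F_m$, the compositum of all the $K_m$, has property (N). The crucial point is the following claim: \emph{if $\alpha\in F_m\setminus F_{m-1}$ then $\IQ(\alpha)$ is ramified at some prime $p\in S_m$.} To see this, suppose $\IQ(\alpha)$ were unramified at every prime of $S_m$; then so is the Galois closure $M$ of $\IQ(\alpha)/\IQ$ inside $F_m$, because $M$ is the compositum of the conjugate fields of $\IQ(\alpha)$, each of which is $\IQ$-isomorphic to $\IQ(\alpha)$ and so shares its set of ramified primes. Hence $MF_{m-1}/F_{m-1}$ is an everywhere-unramified subextension of $F_m/F_{m-1}$. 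But every subextension of $F_m/F_{m-1}$ has the form $F_{m-1}K_m^{H}$ with $H\leq\text{Gal}(K_m/\IQ)$ ($K_m^H$ the fixed field of $H$), and $F_{m-1}K_m^{H}/F_{m-1}$ is unramified everywhere if and only if $K_m^{H}/\IQ$ is, i.e.\ if and only if $K_m^{H}=\IQ$ by Minkowski's theorem; so the only everywhere-unramified subextension of $F_m/F_{m-1}$ is $F_{m-1}$ itself, forcing $M\subseteq F_{m-1}$ and thus $\alpha\in F_{m-1}$, a contradiction. Granting the claim, write $n:=[\IQ(\alpha):\IQ]$, so $n\leq[F_m:\IQ]=N$, and let $p\in S_m$ (so $p>B_m$) be a prime ramifying in $\IQ(\alpha)$; then $p\mid d_{\IQ(\alpha)}$, and combining $|d_{\IQ(\alpha)}|\geq p$ with the classical bound $|d_{\IQ(\alpha)}|\leq n^{n}e^{n^{3}h(\alpha)}$ (a consequence of Mahler's inequality for the discriminant of the minimal polynomial of $\alpha$ in terms of its Mahler measure) yields
\[
h(\alpha)\geq\frac{\log p-n\log n}{n^{3}}\geq\frac{\log B_m-N\log N}{N^{3}}\geq m,
\]
by the choice of $B_m$. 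Therefore, for any $T>0$ every $\alpha\in L$ with $h(\alpha)<T$ lies in the number field $F_{\lceil T\rceil}$, so $\{x\in L:h(x)<T\}$ is finite by Northcott's theorem \cite{Northcott}; this gives the third assertion. (Alternatively, this last paragraph can be replaced by a single application of Widmer's criterion \cite{Widmer} to the tower $(F_m)_m$, whose relative root discriminants $|d_{F_m}|^{1/[F_m:\IQ]}/|d_{F_{m-1}}|^{1/[F_{m-1}:\IQ]}=|d_{K_m}|^{1/|G_m|}$ tend to infinity by the choice of the $B_m$.)

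The main obstacle, I expect, is the realizability input: one needs Shafarevich's theorem — this is precisely where the solvability of the $G_m$ is used, and it is a deep theorem — in the quantitative form that allows all ramification to be pushed beyond a prescribed bound, so as to be simultaneously compatible with the disjointness requirement and with the discriminant growth needed for property (N). Given that, the remaining ingredients (Minkowski's bound and the discriminant–height inequality) are elementary; the only mildly delicate step is the italicized claim, which is handled by the inertia/Minkowski argument above.
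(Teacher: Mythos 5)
Your proposal is correct in substance but takes a genuinely different route from the paper, and it rests on a stronger input. You invoke Shafarevich's theorem in the refined form that $G_m$ can be realized with all ramified primes outside a prescribed finite set (equivalently beyond a bound $B_m$); such refinements can be extracted from the proof of Shafarevich's theorem, but they are a heavier citation, and the paper is organized precisely to avoid them. The paper realizes $G_m^n$ (still solvable) by plain Shafarevich, extracts from it arbitrarily many pairwise $\IQ$-disjoint realizations of $G_m$, and then uses Hermite's theorem together with the bound $2[H:\IQ]^2\le 2|G_m|^2$ on the exponent of any prime in $\Delta_H$ to force the \emph{largest} ramified prime of every nontrivial subfield $H\subseteq K_m$ to be as large as desired; no control of ramification enters the construction. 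The second difference is the Northcott step: you show every $\alpha\in F_m\setminus F_{m-1}$ generates a field ramified at a huge prime and convert this into a height lower bound via a Mahler/Silverman-type discriminant--height inequality, finishing with classical Northcott, whereas the paper feeds the analogous divisibility $p_H^{[M:H]}\mid N_{L_{m-1}/\IQ}(\Delta_{M/L_{m-1}})$, valid for every intermediate field $L_{m-1}\subsetneq M\subseteq L_m$, into Widmer's criterion. Your inertia/Minkowski proof of the italicized claim is sound; your exponent in $|d_{\IQ(\alpha)}|\le n^n e^{n^3h(\alpha)}$ is slightly optimistic (passing to the algebraic integer $a_0\alpha$ gives an exponent of order $2n(n^2-1)$), but this only affects the choice of $B_m$.

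Two cautions. First, the parenthetical ``alternative'' at the end is not a correct application of Widmer's criterion: the criterion requires a lower bound on $N_{L_{m-1}/\IQ}(\Delta_{M/L_{m-1}})^{1/([M:\IQ][M:L_{m-1}])}$ uniformly over \emph{all} intermediate fields $L_{m-1}\subsetneq M\subseteq L_m$, not merely a comparison of the root discriminants of $F_{m-1}$ and $F_m$; since your main argument does not use this remark, drop it or redo it as in the paper. Second, the refined realizability input is genuinely needed for your version of the argument: your height estimate uses that \emph{every} ramified prime of $K_m$ exceeds $B_m$, while the paper's Hermite-based lemma only guarantees that each nontrivial subfield of $K_m$ has \emph{some} large ramified prime. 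That weaker information suffices in the paper because Widmer's criterion works with relative discriminants over $L_{m-1}$, but it would not suffice for your absolute bound on $|d_{\IQ(\alpha)}|$. So either keep the refined Shafarevich statement and give a precise reference for it, or switch to the paper's relative setup.
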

Finally, in Section \ref{Properties} we discuss some properties such as property (P) introduced by Liardet and Narkiewicz to study polynomial mappings. 
It is well-known that property (N) implies property
(P), and we will show that some other properties
also follow from property (N). We use results 
about property (N) and a result of
Dvornicich and Zannier to easily  answer some open problems
proposed by Narkiewicz in \cite{Narkiewicz} regarding these
properties.

\section{Proofs of Theorem \ref{general}, Corollary \ref{cor1} and Corollary \ref{cor2}}\label{d-op}
We start by proving Theorem \ref{general} and then we will deduce Corollary \ref{cor1} and Corollary \ref{cor2}.
For convenience let us recall the statement of the theorem. 
\begin{theorem}\label{d_ab}
Let $K$ be an algebraic extension of $\IQ$. If $K/\IQ$ is Galois then $K^{(d)}/\IQ$ is also Galois.

If moreover $\text{Gal}(K/\IQ)$ has finite exponent $b$, then $\text{Gal}(K^{(d)}/\IQ)$ has exponent bounded by $d! b$ and the field
${K^{(d)}}_{\text{ab}}$ has property (N).
\end{theorem}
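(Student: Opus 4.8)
The plan is to prove the three assertions of Theorem~\ref{d_ab} in order. First, to see that $K^{(d)}/\IQ$ is Galois when $K/\IQ$ is, I would argue that $K^{(d)}$ is stable under every $\sigma \in \mathrm{Gal}(\Qbar/\IQ)$: since $K/\IQ$ is Galois, $\sigma$ fixes $K$ setwise, and if $L/K$ has degree at most $d$ then $\sigma(L)/K = \sigma(L)/\sigma(K)$ is again an extension of $K$ of degree at most $d$, hence $\sigma(L) \subseteq K^{(d)}$. Thus $\sigma$ permutes the generating subfields of $K^{(d)}$ and fixes $K^{(d)}$ setwise, which (together with $K^{(d)}$ being algebraic and the fact that it is generated by separable elements in characteristic $0$) gives that $K^{(d)}/\IQ$ is normal, hence Galois.

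Next, the exponent bound. Let $G = \mathrm{Gal}(K^{(d)}/\IQ)$ and $N = \mathrm{Gal}(K^{(d)}/K) \trianglelefteq G$, so $G/N \cong \mathrm{Gal}(K/\IQ)$ has exponent dividing $b$. I would show $N$ has exponent dividing $d!$; then for any $g \in G$, $g^b \in N$, so $(g^b)^{d!} = g^{d!b} = 1$, giving exponent at most $d!b$. To bound the exponent of $N$: take $\tau \in N$ and any $x \in K^{(d)}$. Then $x$ lies in a compositum $L_1 \cdots L_r$ of finitely many degree-$\le d$ extensions $L_i$ of $K$; since $\tau$ fixes $K$, $\tau$ acts on the set of (at most $d$) $K$-conjugates of a primitive element of each $L_i$, so $\tau$ restricted to the Galois closure of each $L_i$ over $K$ has order dividing $d!$ — wait, more carefully: $\tau$ permutes the roots of the degree-$\le d$ minimal polynomial over $K$, so $\tau^{d!}$ fixes each such root, hence fixes each $L_i$, hence fixes $x$. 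As $x$ was arbitrary, $\tau^{d!} = 1$.

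Finally, property (N) for ${K^{(d)}}_{\mathrm{ab}}$. The strategy is to apply the criterion of the second author from \cite{Widmer} (announced for use in Section~\ref{proof_main}), which typically requires (i) uniformly bounded local degrees and (ii) that the finite subextensions be generated by elements of uniformly bounded degree, or some comparable pair of hypotheses. For (ii): since ${K^{(d)}}_{\mathrm{ab}}/\IQ$ is abelian with Galois group of exponent at most $d!b$ (being a quotient of $\mathrm{Gal}(K^{(d)}/\IQ)$), every finite subextension has Galois group a finite abelian group of exponent $\le d!b$, hence is generated by at most... well, one uses that a finite abelian group of exponent $e$ that is generated by $k$ elements has order $\le e^k$ — but here we need a bound independent of the subextension, so instead I would invoke that an abelian extension of exponent $e$ is generated by radicals / has a description via Kummer theory over $\IQ(\zeta_e)$, giving each element a conjugate field of degree bounded in terms of $e$ alone, i.e. $\le \varphi(e) \cdot e$ or similar. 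For (i): one shows $K^{(d)}/\IQ$ has uniformly bounded local degrees, using that $\mathrm{Gal}(K^{(d)}/\IQ)$ has finite exponent together with Theorem~\ref{gal-exp}; then ${K^{(d)}}_{\mathrm{ab}} \subseteq K^{(d)}$ inherits this. I expect the main obstacle to be verifying hypothesis (ii) cleanly — pinning down a uniform degree bound for generators of the finite subextensions of an abelian extension of bounded exponent — since the naive "bounded number of generators" argument fails for infinitely generated abelian groups; the fix is to exploit the exponent bound via Kummer theory to control the degree of a single generator rather than the rank.
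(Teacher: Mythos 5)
Your arguments for the first two assertions are correct and essentially identical to the paper's: stability of $K^{(d)}$ under any extension of an element of $\text{Gal}(K/\IQ)$ (the paper phrases this via the image of the minimal polynomials of generators of the $L_i$, you via the images of the subfields $L_i$ themselves), and the exponent bound via $\sigma^b\in\text{Gal}(K^{(d)}/K)$ together with the fact that $\text{Gal}(K^{(d)}/K)$ has exponent dividing $d!$, which you justify correctly by the permutation action on the roots of degree $\le d$ minimal polynomials.

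The third assertion, however, is where your proposal has a genuine gap. You reduce to two facts --- (i) ${K^{(d)}}_{\text{ab}}$ has uniformly bounded local degrees, and (ii) its finite subextensions are generated by elements of degree bounded in terms of the exponent $d!b$ --- and then appeal to ``the criterion of the second author from \cite{Widmer}'' as if it took (i) and (ii) as hypotheses. It does not: the criterion of \cite{Widmer} (Theorem \ref{theo_Widmer} in the paper) is a discriminant-growth condition along a nested tower of number fields, and nothing in your sketch verifies such growth for ${K^{(d)}}_{\text{ab}}$. Nor is there any theorem in the cited literature asserting that (i) together with (ii) implies property (N); indeed, the introduction of the paper describes these two facts only as \emph{ingredients} of Bombieri and Zannier's proof, and whether bounded local degrees (even with further generation hypotheses) forces property (N) is precisely the sort of question left open there. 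So the step from (i)+(ii) to property (N) --- which is the entire content of the third assertion --- is missing. Your facts (i) and (ii) are true (and (ii) needs no Kummer theory: a finite abelian extension of exponent $e$ is a compositum of cyclic extensions of degree $\le e$), but they do not suffice as you have arranged them.

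For comparison, the paper closes the gap differently: ${K^{(d)}}_{\text{ab}}$ is Galois of exponent $\le d!b$, hence has uniformly bounded local degrees by Theorem \ref{gal-exp} (Theorem 1.2 of \cite{Che}); being moreover \emph{abelian}, Proposition 2.1 of \cite{Che_Zan} places it inside $\IQ^{(m)}$ for some $m$, hence inside ${\IQ^{(m)}}_{\text{ab}}$, and property (N) then follows from Bombieri--Zannier's Theorem \ref{BZ}. The essential point your proposal misses is this use of abelianness to embed the field into ${\IQ^{(m)}}_{\text{ab}}$, so that the already-proved Theorem \ref{BZ} can be quoted, rather than attempting to re-run or invoke a general (N)-criterion directly on ${K^{(d)}}_{\text{ab}}$.
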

\begin{proof}
We first prove that if $K/\IQ$ is Galois then $K^{(d)}/\IQ$ is also Galois.
In order to do this we take $\sigma\in \text{Gal}(K/\IQ)$. We want to prove that for every embedding $\bar\sigma:K^{(d)}\hookrightarrow \Qbar$ such 
that $\bar{\sigma}|_{K}=\sigma$ one has $\bar\sigma(K^{(d)})\subseteq K^{(d)}$.

Now every element $\beta\in K^{(d)}$ is contained in a compositum $L_1\cdots L_s$ where the $L_i$ are extensions of $K$ of degree at most $d$ (over $K$). 
Therefore $\bar\sigma(\beta)\in \bar\sigma(L_1)\cdots \bar\sigma(L_s)$. Now we notice that $L_i=K(\alpha_i)$ for some $\alpha_i$ with minimal polynomial 
$f_i(x)\in K[x]$ of degree at most $d$. We have $$\bar\sigma(f_i(x))=\sigma(f_i(x))=g_i(x)\in K[x]$$ since $\sigma\in \text{Gal}(K/\IQ)$. Moreover, 
$\deg(g_i(x))\leq d$ and one of the roots of $g_i(x)$ is $\bar\sigma(\alpha_i)$ which thus belongs to $K^{(d)}$. 
Hence $$\bar\sigma(\beta)\in \bar\sigma(L_1)\cdots \bar\sigma(L_s)\subseteq K^{(d)}.$$

We now prove the second part of the theorem. Assume that $K/\IQ$ is Galois with $\exp(\text{Gal}(K/\IQ))=b$.

First of all we want to prove that the extension ${K^{(d)}}/\IQ$ has Galois group with finite exponent.
We take $\sigma \in \text{Gal}({K^{(d)}}/\IQ)$; then $\sigma^{b}|_K$ is the identity since $\text{Gal}(K/\IQ)$ has exponent $b$. 
Therefore $\sigma^b \in  \text{Gal}({K^{(d)}}/K)$ which is a group of exponent bounded by $d!$. Therefore $\sigma^{b (d!)}=1$.

We finally prove that ${K^{(d)}}_{\text{ab}}$ has property (N).
It is a Galois extension of $\IQ$ with Galois group of exponent bounded by $d! b$. Thus by virtue of Theorem 1.2 of 
\cite{Che}, it has uniformly bounded local degrees over $\IQ$. Since the extension is also abelian, from Proposition 2.1 of \cite{Che_Zan} we 
conclude that ${K^{(d)}}_{\text{ab}}$ is a subfield of $\IQ^{(m)}$ for some $m$ and therefore of ${\IQ^{(m)}}_{\text{ab}}$. But according to 
Theorem \ref{BZ}, the latter field has property (N) and so ${K^{(d)}}_{\text{ab}}$ has also property (N).
\end{proof}
Next let us recall and prove Corollary \ref{cor1}.
\begin{corollary}
The field $({\IQ}^{(d)^{n}})_{\text{ab}}$ has property (N) for any positive integers $d$ and $n$.
\end{corollary}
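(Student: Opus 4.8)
The plan is to derive Corollary \ref{cor1} from Theorem \ref{d_ab} (i.e.\ Theorem \ref{general}) by an induction on $n$, keeping track of the exponent of the relevant Galois groups along the way. The base case $n=1$ is exactly the statement that ${\IQ}^{(d)}_{\text{ab}}$ has property (N), which is part of Theorem \ref{d_ab} applied to $K=\IQ$ (whose Galois group is trivial, hence of exponent $1$). For the inductive step, the key observation is that Theorem \ref{d_ab} does not merely conclude property (N) for the maximal abelian subextension; it also tells us that if $K/\IQ$ is Galois of finite exponent, then $K^{(d)}/\IQ$ is Galois of finite exponent (bounded by $d!\,b$). So the natural inductive hypothesis should be the stronger statement: ${\IQ}^{(d)^{n}}/\IQ$ is Galois with Galois group of finite exponent.

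First I would check that ${\IQ}^{(d)^{n}}/\IQ$ is Galois of finite exponent for every $n$, by induction. For $n=1$ this is the first and second parts of Theorem \ref{d_ab} with $K=\IQ$: $\text{Gal}({\IQ}^{(d)}/\IQ)$ has exponent at most $d!$. Assuming ${\IQ}^{(d)^{n}}/\IQ$ is Galois with $\exp(\text{Gal}({\IQ}^{(d)^{n}}/\IQ)) = b_n < \infty$, set $K = {\IQ}^{(d)^{n}}$; then by definition ${\IQ}^{(d)^{n+1}} = K^{(d)}$, and Theorem \ref{d_ab} gives that $K^{(d)}/\IQ$ is Galois with exponent at most $d!\, b_n$, completing this induction (with, say, $b_{n+1} \le d!\, b_n$, so $b_n \le (d!)^n$).

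Once this is established, Corollary \ref{cor1} is immediate: fix $d$ and $n$, and apply Theorem \ref{d_ab} one last time with $K = {\IQ}^{(d)^{n}}$, which we have just shown is Galois over $\IQ$ of finite exponent. The theorem then yields that ${K^{(d)}}_{\text{ab}} = ({\IQ}^{(d)^{n+1}})_{\text{ab}}$ has property (N). Re-indexing (or noting that for $n=1$ we use the base case directly), this covers all positive integers $n$; more precisely, to get $({\IQ}^{(d)^{n}})_{\text{ab}}$ for a given $n$, apply Theorem \ref{d_ab} with $K = {\IQ}^{(d)^{n-1}}$ when $n \ge 2$, and with $K = \IQ$ when $n=1$.

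I do not anticipate a genuine obstacle here — the corollary is essentially a formal iteration of Theorem \ref{d_ab}. The only point requiring a little care is a bookkeeping one: Theorem \ref{d_ab} as stated produces property (N) for $K^{(d)}_{\text{ab}}$ but to iterate one must instead carry forward the finite-exponent-Galois property of the full field $K^{(d)}$ (not of its abelian part), which is why the induction is phrased in terms of $\exp(\text{Gal}({\IQ}^{(d)^{n}}/\IQ))$ rather than in terms of property (N) directly. One should also remark that $({\IQ}^{(d)^{n}})_{\text{ab}}$ is genuinely an abelian extension of $\IQ$, so that the final invocation of Theorem \ref{d_ab} is legitimate — but this is immediate from its definition as the maximal abelian subextension.
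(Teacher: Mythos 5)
Your argument is correct, but it takes a genuinely different route from the paper's. The paper does not iterate $K\mapsto K^{(d)}$ on the full fields; instead it first establishes the inclusion (\ref{incl}), namely that $({\IQ}^{(d)^{n}})_{\text{ab}}$ sits inside the field obtained by alternately applying $(\cdot)^{(d)}$ and $(\cdot)_{\text{ab}}$ starting from $\IQ$ (justified by writing any finite abelian subextension $L$ of ${\IQ}^{(d)^{n}}$ as a tower $\IQ\subseteq L_1\subseteq\ldots\subseteq L_n=L$ with $L_i\subseteq ({L_{i-1}}^{(d)})_{\text{ab}}$), and then applies Theorem \ref{d_ab} recursively to the successive abelianized fields, concluding because property (N) passes to subfields. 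You instead prove by induction, using the first two assertions of Theorem \ref{d_ab}, that ${\IQ}^{(d)^{k}}/\IQ$ is Galois of exponent at most $(d!)^{k}$, and then invoke the theorem a single time with $K={\IQ}^{(d)^{n-1}}$ (resp.\ $K=\IQ$ when $n=1$), observing that ${K^{(d)}}_{\text{ab}}=({\IQ}^{(d)^{n}})_{\text{ab}}$ under the paper's conventions. This is legitimate because Theorem \ref{d_ab} is stated for arbitrary algebraic extensions $K/\IQ$, not only number fields, and your route buys a cleaner argument: it dispenses with the inclusion (\ref{incl}) and its tower justification, at the negligible cost of explicitly carrying the exponent bound for $\text{Gal}({\IQ}^{(d)^{k}}/\IQ)$ — a bound the paper's route also needs implicitly, since each abelianized field in its chain must be Galois of finite exponent for the theorem to be re-applied. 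One superfluous point in your write-up: the closing remark that $({\IQ}^{(d)^{n}})_{\text{ab}}$ is abelian over $\IQ$ is not a hypothesis you need to verify, since the final application of the theorem is to $K={\IQ}^{(d)^{n-1}}$ and its conclusion concerns the maximal abelian subextension of $K^{(d)}/\IQ$ by definition.
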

\begin{proof}
We notice that 
\begin{equation}\label{incl}
({\IQ}^{(d)^{n}})_{\text{ab}}\subseteq   \left(\left( \ldots \left(\left(\left(\IQ^{(d)}\right)_{\text{ab}}\right)^{(d)}\right)_{\text{ab}}\ldots 
\right)^{(d)}\right)_{\text{ab}}
\end{equation}
since every finite abelian extension $L$ in ${\IQ}^{(d)^{n}}$ consists of  a finite tower $\IQ\subseteq L_1\subseteq \ldots \subseteq L_n=L$ in which $L_i\subseteq ({L_{i-1}}^{(d)})_{\text{ab}}$ for every $i$.
Applying Theorem \ref{d_ab} recursively starting from $K=\IQ$ we obtain that the right-hand side field in (\ref{incl}) has property (N) and this concludes the proof.
\end{proof}
Finally, let us restate and prove Corollary \ref{cor2}.
\begin{corollary}
Suppose $K/\IQ$ is an algebraic Galois extension of finite exponent.   
Then ${K}^{(d)^{n}}$ has the Bogomolov property for any positive integers $d$ and $n$.
\end{corollary}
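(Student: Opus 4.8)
The plan is to combine Theorem \ref{d_ab} with the two structural results on extensions of uniformly bounded local degrees, the point being that everything substantive is already contained in Theorem \ref{d_ab}. First I would prove, by induction on $n$, that $K^{(d)^{n}}/\IQ$ is Galois with Galois group of finite exponent. For $n=1$ this is precisely Theorem \ref{d_ab} applied to $K$: its first part gives that $K^{(d)}/\IQ$ is Galois, and since $\text{Gal}(K/\IQ)$ has finite exponent $b$ by hypothesis, its second part gives $\exp(\text{Gal}(K^{(d)}/\IQ))\le d!\,b$. For the inductive step, since $K^{(d)^{n+1}}=(K^{(d)^{n}})^{(d)}$ and $K^{(d)^{n}}/\IQ$ is Galois with finite exponent, applying Theorem \ref{d_ab} to the field $K^{(d)^{n}}$ shows $K^{(d)^{n+1}}/\IQ$ is Galois and $\exp(\text{Gal}(K^{(d)^{n+1}}/\IQ))\le d!\cdot\exp(\text{Gal}(K^{(d)^{n}}/\IQ))$. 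Iterating from $\exp(\text{Gal}(K/\IQ))=b$ yields $\exp(\text{Gal}(K^{(d)^{n}}/\IQ))\le (d!)^{n}b$, in particular finite.

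Second, I would invoke Theorem \ref{gal-exp}: a Galois extension of $\IQ$ whose Galois group has finite exponent has uniformly bounded local degrees. Hence $K^{(d)^{n}}$ has uniformly bounded local degrees over $\IQ$; in particular, fixing any rational prime $p$, there is a constant $D$ such that every completion of $K^{(d)^{n}}$ above $p$ has degree at most $D$ over $\IQ_{p}$.

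Third, I would deduce the Bogomolov property from this local input alone. Bombieri and Zannier's argument (Theorem 2 in \cite{Zan}) shows that $\IQ^{(d)}$ has the Bogomolov property, and the only feature of $\IQ^{(d)}$ that the argument uses is the uniform boundedness of its local degrees above a fixed prime; the same reasoning therefore shows, more generally, that any subfield $L$ of $\Qbar$ with the local property above admits a uniform lower bound $h(\alpha)\ge c(p,D)>0$ for every $\alpha\in L^{\times}$ that is not a root of unity, i.e.\ $L$ has the Bogomolov property. Taking $L=K^{(d)^{n}}$ completes the proof.

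The step worth emphasizing is the last one, and it is also the reason one cannot simply mimic the end of the proof of Theorem \ref{d_ab}: for $n>1$ the field $K^{(d)^{n}}$ need not be abelian, so Proposition 2.1 of \cite{Che_Zan} does not apply and $K^{(d)^{n}}$ need not be contained in any $\IQ^{(m)}$; consequently one cannot inherit the Bogomolov property from property (N) of some ${\IQ^{(m)}}_{\text{ab}}$ via Theorem \ref{BZ}, and must instead extract a Bogomolov lower bound directly from bounded local degrees. Apart from this, the argument is a routine induction, and no further obstacle arises once Theorem \ref{d_ab} and Theorem \ref{gal-exp} are in hand.
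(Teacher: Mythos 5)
Your proposal is correct and follows essentially the same route as the paper: iterate Theorem \ref{d_ab} to see that $K^{(d)^n}/\IQ$ is Galois of finite exponent, apply Checcoli's characterization (Theorem \ref{gal-exp}, i.e.\ Theorem 1.2 of \cite{Che}) to get uniformly bounded local degrees, and then invoke Theorem 2 of \cite{Zan} to conclude the Bogomolov property. The only cosmetic difference is that the paper cites Theorem 2 of \cite{Zan} directly as a statement about Galois extensions with finite local degrees at some primes, whereas you re-extract that statement from the argument for $\IQ^{(d)}$; the content is the same.
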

\begin{proof}
Theorem 2 in \cite{Zan} implies that if $L/\IQ$ is a Galois extension and there is a non-empty set of rational primes at 
which the local degrees of $L$ are finite, then $L$ has the Bogomolov property.

In view of Theorem 1.2 of \cite{Che} we conclude that if $L/\IQ$ has Galois group of finite exponent, then $L$ has the Bogomolov property. 
Now the corollary follows immediately from Theorem \ref{d_ab}.
\end{proof}

\section{Proof of Theorem \ref{main}}\label{proof_main}
We fix some notation.
Let $K\subseteq F\subseteq L$ be a tower of number fields. We denote by $\Delta_K$ the absolute discriminant of $K$, by $\Delta_{F/K}$ the 
(relative) discriminant and by $N_{F/K}$ the norm  map of the extension $F/K$. We recall that
$$\Delta_{L/K}=N_{F/K}(\Delta_{L/F}){\Delta_{F/K}}^{[L:F]}.$$
We will apply the following criterion, proved by the second author in \cite{Widmer}.
\begin{theorem}[Widmer, 2011]\label{theo_Widmer}
Let $K_0 \subsetneq K_1\subsetneq  K_2 \subsetneq \ldots$ be a nested sequence of number fields and set 
$L =\bigcup_{i=1} K_i$. Suppose that 
$$\inf_{K_{i-1}\subsetneq M \subseteq K_i} (N_{K_{i-1}/\IQ}(D_{M/K_{i-1}}))^{\frac{1}{[M:K_0][M:K_{i-1}]}}\longrightarrow \infty$$
as $i$ tends to infinity where the infimum is taken over all intermediate fields
$M$ strictly larger than $K_{i-1}$. 
Then the field $L$ has the Northcott property.
\end{theorem}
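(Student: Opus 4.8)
The plan is to deduce the Northcott property of $L=\bigcup_i K_i$ from Northcott's classical theorem for number fields (recalled above): I will show that for every real $T>0$ there is an index $N=N(T)$ such that every $x\in L$ with $h(x)<T$ already lies in $K_{N}$. Granting this, $\{x\in L:h(x)<T\}=\{x\in K_{N}:h(x)<T\}$ is finite because $K_{N}$ is a number field, and we are done. To find $N(T)$, take $x\in L$ with $h(x)<T$ and let $i$ be minimal with $x\in K_{i}$; we may assume $i\ge 1$. Put $M:=K_{i-1}(x)$; minimality of $i$ gives $K_{i-1}\subsetneq M\subseteq K_{i}$, so $M$ competes in the infimum at stage $i$, and everything then hinges on the following estimate, whose only parameters are $T$ and $[K_{0}:\IQ]$:
\[
\bigl(N_{K_{i-1}/\IQ}(D_{M/K_{i-1}})\bigr)^{\frac{1}{[M:K_{0}][M:K_{i-1}]}}\ \le\ C(T):=2^{[K_{0}:\IQ]}\,e^{4[K_{0}:\IQ]\,T}.
\]
Once this is proved, the hypothesis that the infimum at stage $i$ tends to $\infty$ produces an $N(T)$ beyond which that infimum exceeds $C(T)$; hence $i\le N(T)$, i.e. $x\in K_{N(T)}$, as wanted.

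To prove the estimate, set $f=[M:K_{i-1}]$, $d=[K_{i-1}:\IQ]$, $n=[M:\IQ]=df$ and $n_{0}=[K_{0}:\IQ]$, so that $[M:K_{0}][M:K_{i-1}]=nf/n_{0}$. Let $g\in K_{i-1}[X]$ be the monic minimal polynomial of $x$ over $K_{i-1}$, and let $\mathfrak m=\bigoplus_{k=0}^{f-1}\mathcal O_{K_{i-1}}x^{k}$ be the corresponding free $\mathcal O_{K_{i-1}}$-lattice in $M$. The transition formula for lattice discriminants reads $\operatorname{disc}_{K_{i-1}}(\mathfrak m)=D_{M/K_{i-1}}\cdot[\mathcal O_{M}:\mathfrak m]^{2}$ as fractional ideals of $\mathcal O_{K_{i-1}}$ (with $[\,\cdot:\cdot\,]$ the generalized index), and since the discriminant of the power basis is $\operatorname{disc}_{K_{i-1}}(\mathfrak m)=(\operatorname{disc}_{K_{i-1}}g)$ while $[\mathcal O_{M}:\mathfrak m]=[\mathcal O_{M}:\mathfrak m\cap\mathcal O_{M}]\,[\mathfrak m:\mathfrak m\cap\mathcal O_{M}]^{-1}$, one gets
\[
\bigl|N_{K_{i-1}/\IQ}(D_{M/K_{i-1}})\bigr|\ \le\ \bigl|N_{K_{i-1}/\IQ}(\operatorname{disc}_{K_{i-1}}g)\bigr|\cdot\bigl[\mathfrak m:\mathfrak m\cap\mathcal O_{M}\bigr]^{2},
\]
the last bracket being an index of abelian groups. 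For the first factor I expand $\bigl|N_{K_{i-1}/\IQ}(\operatorname{disc}_{K_{i-1}}g)\bigr|=\prod_{\tau}\bigl|\operatorname{disc}(\tau g)\bigr|$ over the embeddings $\tau\colon K_{i-1}\hookrightarrow\mathbb C$, apply $|a-b|\le 2\max(1,|a|)\max(1,|b|)$ to the pairwise differences of the roots of each $\tau g$, and use the bookkeeping that the roots of all the $\tau g$ together form the multiset of $\IQ$-conjugates of $x$, each occurring $[M:\IQ(x)]$ times; together with the expression of the Weil height as $\tfrac{1}{[\IQ(x):\IQ]}$ times the logarithmic Mahler measure of the primitive integral minimal polynomial of $x$ over $\IQ$, this yields $\bigl|N_{K_{i-1}/\IQ}(\operatorname{disc}_{K_{i-1}}g)\bigr|\le 2^{\,n(f-1)}e^{\,2n(f-1)h(x)}$. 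For the second factor, $[\mathfrak m:\mathfrak m\cap\mathcal O_{M}]=[\mathfrak m+\mathcal O_{M}:\mathcal O_{M}]$, and since $x^{k}\in\mathfrak b^{-(f-1)}$ for $0\le k\le f-1$, where $\mathfrak b=\prod_{\mathfrak P}\mathfrak P^{\max(0,-v_{\mathfrak P}(x))}$ is the denominator ideal of $x$ in $M$, we get $[\mathfrak m:\mathfrak m\cap\mathcal O_{M}]\le[\mathfrak b^{-(f-1)}:\mathcal O_{M}]=N_{M/\IQ}(\mathfrak b)^{f-1}$; finally $\log N_{M/\IQ}(\mathfrak b)=[M:\IQ]\,h_{\mathrm{fin}}(x)\le n\,h(x)$, with $h_{\mathrm{fin}}$ the non-archimedean part of the height. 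Multiplying the two bounds, taking logarithms and dividing by $[M:K_{0}][M:K_{i-1}]=nf/n_{0}$ makes the degrees collapse:
\[
\frac{\log\bigl|N_{K_{i-1}/\IQ}(D_{M/K_{i-1}})\bigr|}{[M:K_{0}][M:K_{i-1}]}\ \le\ n_{0}\,\frac{f-1}{f}\bigl(\log 2+4h(x)\bigr)\ <\ n_{0}\bigl(\log 2+4T\bigr),
\]
which is exactly the claimed estimate.

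The only genuinely delicate point is the treatment of elements $x$ that are not algebraic integers; for integral $x$ the second factor above is trivial and the argument is clean. The obvious shortcut --- replace $x$ by $bx$ for the least positive integer $b$ with $bx$ integral, invoke the integral case, and absorb $h(bx)\le h(x)+\log b$ --- does not work, because $b$ can be as large as $e^{[\IQ(x):\IQ]\,h(x)}$, so that after dividing by $[M:K_{0}][M:K_{i-1}]$ a parasitic factor $[\IQ(x):\IQ]$ survives, and that degree is unbounded on sets of bounded height. The remedy is to work with the denominator \emph{ideal} $\mathfrak b\subseteq\mathcal O_{M}$ instead: its norm is exactly $e^{[M:\IQ]\,h_{\mathrm{fin}}(x)}$, so the denominator contributes only linearly in $h(x)$ with no extra degree factor, and the division by $[M:K_{0}][M:K_{i-1}]$ then leaves a bound depending on $T$ and $[K_{0}:\IQ]$ alone. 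Everything else --- the transition formula for lattice discriminants, the elementary estimate on $\operatorname{disc}(\tau g)$, the multiplicity count for the conjugates, and the final arithmetic with the exponents $f,d,n,n_{0}$ --- is routine.
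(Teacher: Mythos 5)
Your proof is correct, but a comparison with ``the paper's proof'' is not quite possible: the paper does not prove this statement, it simply quotes it as a theorem of the second author and refers to \cite{Widmer} for the proof. Measured against that original source, your overall strategy is the same one: given $T$, pick $x\in L$ with $h(x)<T$, take the minimal $i$ with $x\in K_i$, feed $M=K_{i-1}(x)$ into the infimum, and use a lower bound for the height of a generator of $M/K_{i-1}$ in terms of $N_{K_{i-1}/\IQ}(D_{M/K_{i-1}})$ to force $i\le N(T)$, so that $\{x\in L: h(x)<T\}$ sits inside the single number field $K_{N(T)}$ and Northcott's classical theorem applies. Where you genuinely diverge is in the key inequality: the original argument imports a Silverman-type height--discriminant estimate for generators, whereas you derive the needed bound from scratch, by comparing $D_{M/K_{i-1}}$ with the discriminant of the monic minimal polynomial $g$ of $x$ over $K_{i-1}$ through the lattice-index identity $\mathrm{disc}(\mathfrak m)=D_{M/K_{i-1}}[\mathcal O_M:\mathfrak m]^2$, bounding $\prod_\tau|\mathrm{disc}(\tau g)|$ via the Mahler-measure expression of $h(x)$ (your multiplicity count $[M:\IQ(x)]$ and the resulting bound $2^{n(f-1)}e^{2n(f-1)h(x)}$ are right), and controlling the non-integrality of $x$ by the denominator ideal, giving $[\mathfrak m:\mathfrak m\cap\mathcal O_M]\le N_{M/\IQ}(\mathfrak b)^{f-1}\le e^{n(f-1)h(x)}$; dividing by $[M:K_0][M:K_{i-1}]=nf/n_0$ then yields a bound depending only on $T$ and $[K_0:\IQ]$, exactly as needed, and your remark on why clearing denominators by a rational integer would not work is accurate --- the ideal-theoretic treatment is precisely the point. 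What your route buys is a self-contained proof with an explicit constant $C(T)=2^{[K_0:\IQ]}e^{4[K_0:\IQ]T}$; what the citation route buys is brevity. The only ingredients you use as black boxes (the discriminant/module-index formula over a Dedekind domain, the fact that $N_{K_{i-1}/\IQ}$ of the generalized index of nested lattices is the group index, $h$ as normalized Mahler measure, and $\log N_{M/\IQ}(\mathfrak b)=[M:\IQ]h_{\mathrm{fin}}(x)$) are standard and hide no gap, but in a polished write-up you should supply references for them.
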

Moreover, we need the following lemma.
\begin{lemma}\label{realize}
Let $G$ be a finite solvable group. Then there exists a sequence of Galois extensions $\{K_i\}_i$ of $\IQ$ such that:
\begin{itemize}
\item $\text{Gal}(K_i/\IQ)=G$ for every $i$; 
\item $K_1\cdots K_{i-1}\cap K_{i}=\IQ$ for every $i>1$.
\end{itemize}   
\end{lemma}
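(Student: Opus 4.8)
The plan is to use a classical result on the inverse Galois problem for solvable groups — Shafarevich's theorem, which guarantees that every finite solvable group $G$ occurs as $\text{Gal}(K/\IQ)$ for infinitely many number fields $K$ — and then to extract from this an infinite sequence whose members are pairwise "independent" in the sense required by the second bullet. The key observation is that if $N/\IQ$ is Galois, then the set of primes ramifying in $N$ is finite, and conversely a Galois extension unramified at all primes in a given finite set $S$ is linearly disjoint from any Galois extension ramified only inside $S$. So I would build the $K_i$ inductively, at each stage controlling ramification.

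First I would fix, for the inductive step, the already-constructed fields $K_1,\ldots,K_{i-1}$ and let $S_{i-1}$ be the (finite) set of rational primes ramifying in the compositum $K_1\cdots K_{i-1}$. The heart of the argument is then: produce a Galois extension $K_i/\IQ$ with $\text{Gal}(K_i/\IQ)\cong G$ that is unramified at every prime in $S_{i-1}$. This is exactly the kind of statement that Shafarevich's method yields — his construction of solvable extensions realizes $G$ with ramification that can be confined to a set of primes disjoint from any prescribed finite set; equivalently, one may realize $G$ over $\IQ$ and then twist/choose the construction so as to avoid $S_{i-1}$. Granting such a $K_i$, the extension $K_1\cdots K_{i-1}$ is ramified only at primes in $S_{i-1}$, while $K_i$ is unramified at all of those, so $K_1\cdots K_{i-1}\cap K_i$ is a subextension of $\IQ$ that is unramified everywhere, hence equals $\IQ$ by Minkowski's theorem. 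This gives the second bullet, and the first bullet is immediate from the choice of $K_i$. Iterating produces the desired sequence $\{K_i\}_i$.

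The main obstacle is making precise the claim that $G$ can be realized with ramification avoiding a prescribed finite set $S$ of primes. One clean way to handle this is to invoke Shafarevich's theorem together with the fact that a finite solvable group has only finitely many extensions with bounded ramification is \emph{false} in the needed direction — rather, the correct input is that there are \emph{infinitely} many linearly disjoint realizations; so I would instead argue directly that from any one realization $G=\text{Gal}(K/\IQ)$ one obtains infinitely many pairwise linearly disjoint copies. Concretely, one can use that for a solvable $G$ the realization can be performed "relative to" a large auxiliary cyclotomic or ray-class base and then descend, or one can cite the standard strengthening of Shafarevich's theorem asserting that $G$ is realizable unramified outside any density-one set of primes. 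Either way, the step "realize $G$ unramified outside a set disjoint from $S_{i-1}$" is the substantive point; everything after it is the elementary ramification/linear-disjointness bookkeeping plus Minkowski, which presents no difficulty.

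Finally, once Lemma \ref{realize} is available, Theorem \ref{main} itself should follow by applying the lemma to each $G_m$ with the finite sets growing, and then checking the Northcott property for the total compositum via the discriminant criterion of Theorem \ref{theo_Widmer}: arranging along the way that the relative discriminants $D_{M/K_{i-1}}$ are large enough — which one can force by additionally twisting each $K_i$ so that its ramified primes are large, inflating the norm of the discriminant — so that the displayed limit in Theorem \ref{theo_Widmer} tends to infinity. I expect that the freedom to choose the ramified primes of each $K_i$ as large as we like is precisely what reconciles the "avoid $S_{i-1}$" requirement of the lemma with the "discriminants blow up" requirement of the Northcott criterion.
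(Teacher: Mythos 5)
Your plan (build the $K_i$ inductively, forcing $K_i$ to be unramified at the finite set $S_{i-1}$ of primes ramified in $K_1\cdots K_{i-1}$, and then conclude $K_1\cdots K_{i-1}\cap K_i=\IQ$ by Minkowski) is sound in outline, but it hinges entirely on the step you yourself single out as the main obstacle: that a finite solvable group $G$ can be realized over $\IQ$ with ramification avoiding an arbitrary prescribed finite set of primes. You do not prove this, and the justifications you sketch are too vague to carry it: plain Shafarevich (``$G$ is realizable over $\IQ$'') does not by itself give any control on the ramified primes, the sentence about ``finitely many extensions with bounded ramification'' is garbled, and the appeal to a ``standard strengthening asserting realizability unramified outside any density-one set'' is not a statement you can cite in that form. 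A refined version of Shafarevich's theorem with ramification avoidance does exist in the literature, so the approach could be completed with a precise reference, but as written the key claim is asserted rather than established, and that is a genuine gap.

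The paper avoids this issue with a much softer trick that needs only the bare statement of Shafarevich's theorem: for each $n$ the group $G^n$ is again solvable, hence realizable, say $\mathrm{Gal}(F_n/\IQ)=G^n$; the $n$ subfields of $F_n$ corresponding to the $n$ coordinate projections are realizations of $G$ that are pairwise linearly disjoint over $\IQ$. Letting $n$ grow produces an infinite family $\mathcal{F}$ of realizations of $G$ with pairwise intersection $\IQ$, and then one selects the sequence inductively: a fixed finite extension such as $K_1\cdots K_{i-1}$ can intersect only finitely many members of $\mathcal{F}$ nontrivially (it has only finitely many subfields, and two members of $\mathcal{F}$ sharing the same nontrivial subfield would contradict pairwise triviality), so an admissible $K_i$ always exists. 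Note also that your closing remarks about making the ramified primes large to feed Theorem \ref{theo_Widmer} are handled in the paper without any ramification prescription: Hermite's theorem guarantees that among infinitely many pairwise disjoint realizations the discriminants, and hence their largest prime divisors, tend to infinity, so suitable $K_i$ can simply be selected from the family.
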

\begin{proof} 
We first note that for every positive integer $n$ the group $G^n$ is also solvable, and thus, by Shafarevich's Theorem, it is realizable. 
This means that there exists a Galois extension $F_{n}/\IQ$ with $\text{Gal}(F_{n}/\IQ)={G}^n$. Now $F_n$ has $n$ subfields with Galois group 
isomorphic to $G$ which are pairwise linearly disjoint over $\IQ$, in particular their pairwise intersection is $\IQ$. 
Since this can be done for every $n$, we see that there exists an infinite family 
$\mathcal{F}=\{K_i\}_i$ of realizations of $G$ with $K_i\cap K_j=\IQ$ for $i\neq j$.

Now if we fix any finite extension $K/\IQ$ then it can only intersect with finitely many elements of $\mathcal{F}$, otherwise it would have 
infinite degree and this concludes the proof.
\end{proof}

Let $G$ be a finite solvable group. We denote by $\{K_i\}_i$ a sequence of realizations of $G$ as in Lemma \ref{realize}. 
Suppose $|G|>1$ and consider the following set of fields 
\begin{alignat*}1
\mathcal{F}:=\{H|\IQ\neq H\subseteq K_i \text{ for some } i\}. 
\end{alignat*}
By Hermite's Theorem there is only a finite number of number fields with absolute value of the discriminant below a given bound. Therefore the sequence 
$\{|\Delta_H|\}_{H\in \mathcal{F}}$ tends to infinity.

Now for every $H\in \mathcal{F}$ the power to which a prime number divides the discriminant $\Delta_H$ is bounded from above by  
$$2[H:\IQ]^2\leq 2|G|^2,$$ and so depends only on $|G|$ (see, e.g., Theorem B.2.12 in \cite{BoGu}). 
Writing $p_{H}$ for the biggest prime number dividing $\Delta_{H}$, this implies that $p_{H_i}\rightarrow \infty$ for every sequence 
$H_1, H_2, \ldots$ of distinct elements of $\mathcal{F}$.
In particular we have that for every integer $B>0$ the set $$\{H\in \mathcal{F}|p_H\leq B\}$$ is finite. Therefore for every $B>0$, there exists 
an integer $n$ such that for every $i>n$ the field $K_i$ satisfies $p_H>B$ for every $\IQ\neq H\subseteq K_i$.

We now consider a sequence of finite solvable groups $\{G_i\}_i$. We choose realizations $K_i/\IQ$ of  $G_i$ such that for every $i>1$, 
\begin{alignat}1\label{cond2}
p_H>{p_{K_{i-1}}}^{|G_{i}|^2} \text{ for every } \IQ\neq H\subseteq K_{i}.
\end{alignat}
In particular we have
\begin{alignat*}1
K_1\cdots K_{i-1}\cap K_{i}=\IQ.
\end{alignat*}
Moreover, we have a nested sequence of number fields
$$L_0=\IQ\subsetneq L_1=K_1\subsetneq L_2=K_1\cdot K_2\subsetneq \ldots \subsetneq L_i=K_1\cdots K_i \subsetneq \ldots$$
and we want to prove that the field $L=\bigcup_i L_i$ (which equals the compositum of the fields $K_i$) has the Northcott property. 

In order to prove this we use Theorem \ref{theo_Widmer}. Consider an intermediate extension $L_{i-1}\subsetneq M\subseteq L_i$. 
Then $M$ is given by a compositum of the form
$M=L_{i-1}\cdot H$ where $\IQ\neq H=M\cap K_{i}\subseteq K_i$ (as $K_i/\IQ$ is Galois we can apply Corollaire 1 from \cite{Bourbaki9}
chap. v, \S 10).
We consider the tower $\IQ\subseteq H \subseteq M$. Here we have $$\Delta_M=N_{H/\IQ}(\Delta_{M/H}){\Delta_{H}}^{[M:H]},$$
and therefore $p_H^{[M:H]}\mid \Delta_M$. But as $\IQ\subseteq L_{i-1} \subsetneq M$ we have also
$$\Delta_M=N_{L_{i-1}/\IQ}(\Delta_{M/L_{i-1}}){\Delta_{L_{i-1}}}^{[M:L_{i-1}]}.$$
Now note that $p_{L_{i-1}}=\max\{p_{K_{1}},\ldots, p_{K_{i-1}}\}=p_{K_{i-1}}$. Thus
(\ref{cond2}) implies $p_H>p_{L_{i-1}}$, and so we conclude that $p_H^{[M:H]}\mid N_{L_{i-1}/\IQ}(\Delta_{M/L_{i-1}})$.
Therefore, and by (\ref{cond2}), we get
$$\inf_{L_{i-1}\subsetneq M\subseteq L_i}{N_{L_{i-1}/\IQ}(\Delta_{M/L_{i-1}})}^{\frac{1}{[M:\IQ][M:L_{i-1}]}}\geq 
{p_H}^{\frac{[M:H]}{[M:\IQ][M:L_{i-1}]}}>{p_H}^{\frac{1}{{|G_i|}^2}}>p_{K_{i-1}}.$$
The latter clearly tends to infinity and thus, by Theorem \ref{theo_Widmer}, the field $L=\bigcup_i L_i$ has the Northcott property.

\section{Answer to Question 4}\label{ss2}
In this section we affirmatively answer Question 4, that is, we show the existence of a subfield of $\Qbar$ with 
uniformly bounded local degrees which is not contained in $\mathbb{Q}^{(d)}$ for any $d$
but has property (N). 

To this end we will apply Theorem \ref{main} and the following result, which was proved in 
\cite{Che_Zan} (see proof of Theorem 1.1), and, with a slightly different construction, also in \cite{Che} (see proof of Theorem 2.(iii)).
\begin{theorem}[Checcoli-Zannier, 2011]\label{teo_Che_2} Let $p$ and $q$ be two odd primes with $p$ dividing $q-1$. Then there exists a family 
of finite non-abelian $pq$-groups $\{G_m\}_m$ such that, given any realization $K_m/\IQ$ for $G_m$,  the compositum $K$ of the family 
$\{K_m\}_m$ has uniformly bounded local degrees but it is not contained in $\IQ^{(d)}$ for any positive integer $d$.
\end{theorem}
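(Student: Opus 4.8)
The plan is to reduce both assertions to a single group-theoretic construction. Suppose we can produce finite non-abelian $pq$-groups $G_m$ with the two features: (i) $\exp(G_m)$ divides $pq$ for every $m$; and (ii) for every positive integer $d$ there is an $m$ for which $G_m$ is not a quotient of a subgroup of $(S_d)^n$, for any $n$. Granting this, fix any realizations $K_m/\IQ$. Then the compositum $K$ is Galois over $\IQ$, and restriction embeds $\mathrm{Gal}(K/\IQ)$ into $\prod_m\mathrm{Gal}(K_m/\IQ)=\prod_m G_m$; by (i) every element of the product has order dividing $pq$, so $\mathrm{Gal}(K/\IQ)$ has finite exponent, and Theorem \ref{gal-exp} then shows that $K$ has uniformly bounded local degrees. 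It remains to use (ii) to rule out $K\subseteq\IQ^{(d)}$.

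For that I would first record an elementary criterion: if $L/\IQ$ is finite Galois and $L\subseteq\IQ^{(d)}$, then $\mathrm{Gal}(L/\IQ)$ is a quotient of a subgroup of $(S_d)^n$ for some $n$. Indeed $L$ lies in a compositum $L_1\cdots L_s$ with $[L_i:\IQ]\le d$; passing to Galois closures, $L\subseteq\widetilde{L_1}\cdots\widetilde{L_s}$ and $\mathrm{Gal}(\widetilde{L_1}\cdots\widetilde{L_s}/\IQ)\hookrightarrow\prod_i\mathrm{Gal}(\widetilde{L_i}/\IQ)\le (S_d)^s$, which surjects onto $\mathrm{Gal}(L/\IQ)$ because $L/\IQ$ is Galois. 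Next I would isolate an invariant monotone under ``taking quotients of subgroups of powers of $S_d$'': for any prime $\ell$, a Sylow $\ell$-subgroup of such a group is a quotient of a subgroup of $(\mathrm{Syl}_\ell(S_d))^n$, and $\mathrm{Syl}_\ell(S_d)$ is a direct product of iterated wreath products $C_\ell\wr\cdots\wr C_\ell$ of depth at most $\log_\ell d$, hence of derived length at most $\log_\ell d$. Therefore every nilpotent section of a group that is a quotient of a subgroup of a power of $S_d$ has derived length at most $\log_2 d$. So (ii) follows from the stronger statement that, in our construction, the Sylow $q$-subgroup of $G_m$ has derived length $m$: given $d$, pick $m$ with $q^m>d$; were $G_m$ a quotient of a subgroup of a power of $S_d$, its Sylow $q$-subgroup would have derived length at most $\log_q d<m$, a contradiction. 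Hence $K_m\not\subseteq\IQ^{(d)}$, and a fortiori $K\not\subseteq\IQ^{(d)}$.

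It remains to build the $G_m$; this is the core of the argument, carried out in \cite{Che_Zan} (and, with a variant, in \cite{Che}). I would start, using $p\mid q-1$, from the Frobenius group $G_1=C_q\rtimes C_p$, which has exponent $pq$, and set inductively $G_{m+1}=V_m\rtimes G_m$ for a faithful $\mathbb{F}_q[G_m]$-module $V_m$ chosen so that every element of order $q$ in $G_m$ acts on $V_m$ with all Jordan blocks of size at most $q-1$. This condition makes $1+w+\cdots+w^{q-1}=(w-1)^{q-1}$ annihilate $V_m$ for each such $w$; since moreover $x^{pq}-1=(x^p-1)^q$ in characteristic $q$, no element of order divisible by $q^2$ is created, and $\exp(G_{m+1})$ still divides $pq$. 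One checks that $V_m$ can moreover be chosen so that the last nontrivial term of the derived series of a Sylow $q$-subgroup of $G_m$, itself a nontrivial $q$-group, acts nontrivially on $V_m$; this forces the derived length of the Sylow $q$-subgroup to increase by one at each step, so that $\mathrm{dl}(\mathrm{Syl}_q(G_m))=m$. Each $G_m$ is then a $\{p,q\}$-group, in fact of order $p\cdot q^{c_m}$, is non-abelian for every $m$ since it surjects onto $G_1$, and is solvable, so realizations over $\IQ$ exist.

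The main obstacle is the module-theoretic step just described: one must keep the exponent bounded by $pq$ (the order-$q$ elements act with Jordan blocks of size $\le q-1$) while simultaneously forcing the derived length of the Sylow $q$-subgroups to grow without bound, i.e.\ one needs finite $q$-groups of bounded exponent but unbounded derived length living inside the $G_m$. This is possible precisely because $q\ge 5$ — and $p,q$ odd with $p\mid q-1$ automatically forces $q\ge 5$ — in contrast to exponent $3$, where all finite groups have derived length at most $2$; this is where the arithmetic hypotheses on $p$ and $q$ enter. Everything else is the routine assembly of the compositum together with the two ingredients above.
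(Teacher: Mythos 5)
Your reduction is fine: exponent dividing $pq$ for every $G_m$ gives finite exponent for $\text{Gal}(K/\IQ)$ and hence uniformly bounded local degrees by Theorem \ref{gal-exp}; and if a finite Galois $L\subseteq \IQ^{(d)}$ then $\text{Gal}(L/\IQ)$ is indeed a quotient of a subgroup of some $(S_d)^n$, so any invariant monotone under sections and bounded on powers of $S_d$ (such as the derived length of Sylow $q$-subgroups, bounded by $\log_q d$) would do the job. The genuine gap is the construction of the $G_m$, which you yourself flag and then dispose of with ``one checks that $V_m$ can moreover be chosen so that\dots''. What you need there is a family of finite $\{p,q\}$-groups of exponent dividing $pq$ whose Sylow $q$-subgroups (of exponent $q$) have unbounded derived length; the existence of finite groups of prime exponent $q\ge 5$ of unbounded derived length is a deep theorem (Razmyslov, via non-solvable Engel Lie algebras), and it certainly does not follow from the Jordan-block observation — if your one-step induction worked as written it would reprove that theorem in a few lines. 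Moreover the inductive criterion you state is insufficient: with $Q=\mathrm{Syl}_q(G_m)$ of derived length $k$, setting $W_1=[V_m,Q]$ and $W_{j+1}=[W_j,Q^{(j)}]$, the derived length of $V_m\rtimes Q$ is $k+1$ iff $W_k\neq 1$, a nested commutator condition that is not implied by ``$Q^{(k-1)}$ acts nontrivially on $V_m$'' (nor by faithfulness), and it must be achieved simultaneously with the Jordan-block bound for \emph{all} order-$q$ elements of the new group so that the next step can be performed — this is exactly where the Burnside-type difficulty sits, and no argument is given.

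For comparison: the paper does not prove this theorem but quotes it from \cite{Che_Zan} (and \cite{Che}), and the cited construction, due to Lucchini, is quite different at the decisive point. There $G_m=V_m\rtimes E_m$ with $E_m$ an \emph{extraspecial $p$-group} of order $p^{2m+1}$ and exponent $p$, and $V_m$ a faithful irreducible $\mathbb{F}_q[E_m]$-module (available over $\mathbb{F}_q$ because $p\mid q-1$, of dimension $p^m$). Since a $p$-group acts on a $q$-module, the action is coprime and the exponent-$pq$ requirement is automatic — no Jordan-block condition ever arises — and these groups have derived length at most $3$, so your derived-length invariant could not detect them. The non-containment in $\IQ^{(d)}$ is instead obtained from a subdirect-product argument showing that, for $m$ large, $G_m$ is not an epimorphic image of any subgroup of a direct product of groups of bounded order, the growth coming from the fact that faithful representations of $E_m$ have degree at least $p^m$. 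So your route diverges from the cited proof exactly where the work is, and as it stands the key step is not established; to salvage your invariant you would have to import Razmyslov-type exponent-$q$ groups, which is far heavier machinery than the extraspecial-group construction the theorem actually rests on.
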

The groups $G_m$ are finite groups of exponent $pq$. Their construction is due to A. Lucchini and it is based on certain $p$-groups called \emph{extraspecial groups} 
(i.e., $p$-groups in which the center equals the commutator subgroup and they are both of order $p$), and their modules. For details on 
extraspecial groups we refer the reader to Ch.A, \S 19 and \S 20 in \cite{Doerk}.

In view of Theorem \ref{main} we can realize the groups $\{G_m\}_m$ of Theorem \ref{teo_Che_2} with a family of number fields $\{K_m\}_m$ in 
such a way that the compositum $K$ of the fields $K_m$ has the Northcott property. This provides a field with the
Northcott property with uniformly bounded local degrees but not contained in $\IQ^{(d)}$ for any integer $d$.

\section{Answer to Question 5}\label{ss1} 
The goal of this section is to positively answer Question 5.
We will show that for any $d\geq 27$
there exists a subfield $K$ of $\IQ^{(d)}$ with property (N) whose 
finite subextensions of $K/\IQ$ cannot be generated by elements of uniformly bounded degree.

As in the previous section we will apply Theorem \ref{main}. We also need the following result which was proved in Subsection 3.3 of \cite{Che}.
\begin{theorem}[Checcoli, 2010]\label{q5}
Let $p$ be an odd prime number. Let $G$ be an extraspecial group of order $p^3$ and exponent $p$ and let $\mathcal{L}=\{L_i\}_i$ be a family of 
Galois extensions such that for every index $i$ $\text{Gal}(L_i/\IQ)=G$ and 
$L_{i+1}$ is linearly disjoint over $\IQ$ from the compositum $L_1\cdots L_{i}$. Then there exists a family $\{K_m\}_m$ of finite 
Galois extensions of $\IQ$ such that:
\begin{itemize}
\item for every $m$ the group $\text{Gal}(K_m/\IQ)$ is an extraspecial $p$-group of order $p^{2m+1}$ and exponent $p$;
\item every $K_m$ is a subfield of the compositum of $m$ fields of $\mathcal{L}$. In particular the compositum $K$ of the family 
$\{K_m\}_m$ is a subfield of $\IQ^{(p^3)}$;
\item the extensions $K_m$ cannot be generated over $\IQ$ by elements of uniformly bounded degree.
\end{itemize}
\end{theorem}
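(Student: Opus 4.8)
The plan is to build each $K_m$ so that $\mathrm{Gal}(K_m/\IQ)$ is the iterated central product of $m$ copies of $G$, realised as an explicit quotient of $\mathrm{Gal}(L_1\cdots L_m/\IQ)\cong G^m$, and then to reduce the statement about generators to a group-theoretic fact about extraspecial $p$-groups coming from the symplectic form on their central quotient. First I would set up the groups. Write $Z=Z(G)$, a group of order $p$, and recall that, $G$ being extraspecial, the commutator induces a nondegenerate alternating form on the $\mathbb{F}_p$-space $G/Z\cong\mathbb{F}_p^{2}$ (defined by $[x,y]=z^{b(\bar x,\bar y)}$ for a fixed generator $z$ of $Z$). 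For each $m$ put $N_m=\{(z_1,\dots,z_m)\in Z^m:z_1\cdots z_m=1\}\cong\mathbb{F}_p^{m-1}$; this is a subgroup of the centre $Z^m$ of $G^m$, hence normal in $G^m$. I would then check that $E_m:=G^m/N_m$ has order $p^{2m+1}$, has exponent $p$ (since $G^m$ has exponent $p$), and is extraspecial: $[E_m,E_m]=Z^m/N_m$ is central of order $p$, and using nondegeneracy of the form on each of the $m$ factors one gets $Z(E_m)=Z^m/N_m=\Phi(E_m)$.

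Next I would produce the fields. The linear disjointness hypothesis yields, by induction on $m$, a canonical isomorphism $\mathrm{Gal}(L_1\cdots L_m/\IQ)\cong G^m$. Let $K_m\subseteq L_1\cdots L_m$ be the fixed field of the subgroup corresponding to $N_m$. Since $N_m$ is normal in $G^m$, the extension $K_m/\IQ$ is Galois with group $G^m/N_m=E_m$, which gives the first bullet; and by construction $K_m$ lies in the compositum of the $m$ fields $L_1,\dots,L_m\in\mathcal{L}$, which gives the second bullet and hence $K=\bigcup_m K_m\subseteq\IQ^{(p^3)}$.

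For the third bullet, the key step is the translation: a finite Galois extension $F/\IQ$ with group $\Gamma$ is generated over $\IQ$ by elements $\alpha_1,\dots,\alpha_k$ with $[\IQ(\alpha_j):\IQ]\le D$ for all $j$ if and only if $\Gamma$ has subgroups $S_1,\dots,S_k$, each of index at most $D$, with $S_1\cap\cdots\cap S_k=1$; indeed $\IQ(\alpha_j)$ corresponds to a subgroup $S_j$ of index $[\IQ(\alpha_j):\IQ]$ and the compositum $\IQ(\alpha_1,\dots,\alpha_k)$ corresponds to $S_1\cap\cdots\cap S_k$. Now in $E_m$ the centre $Z(E_m)$ has order $p$ and is the unique minimal normal subgroup (a nontrivial normal subgroup of a $p$-group intersects the centre nontrivially, and here the centre has order $p$), so it lies in every nontrivial normal subgroup. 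If $S\le E_m$ is core-free then $S\cap Z(E_m)=1$, so $S$ maps isomorphically onto a subgroup of $E_m/Z(E_m)\cong\mathbb{F}_p^{2m}$, that is, onto an $\mathbb{F}_p$-subspace which moreover is totally isotropic for the induced symplectic form (for $s,s'\in S$ one has $[s,s']\in S\cap Z(E_m)=1$); a totally isotropic subspace of a nondegenerate symplectic space of dimension $2m$ has dimension at most $m$, so $[E_m:S]\ge p^{m+1}$. Hence any subgroup of $E_m$ of index $<p^{m+1}$ has nontrivial core and therefore contains $Z(E_m)$; so if $D<p^{m+1}$ and $S_1,\dots,S_k$ all have index $\le D$ then $Z(E_m)\le S_1\cap\cdots\cap S_k\ne1$. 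Thus $K_m$ cannot be generated over $\IQ$ by elements of degree $<p^{m+1}$, and since $p^{m+1}\to\infty$ the family $\{K_m\}_m$ admits no uniform bound, which is the third bullet.

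The main obstacle is this last step: one must realise that the relevant condition is the triviality of an intersection of several subgroups (not of the core of a single subgroup of small index), and then extract from the symplectic geometry of $E_m/Z(E_m)$ the lower bound $p^{m+1}$ for the index of any subgroup that fails to contain the centre. The construction in the first two steps is comparatively routine once $E_m$ is recognised as the central product of $m$ copies of $G$ and the linear disjointness of the $L_i$ is used to identify $G^m$ as the ambient Galois group.
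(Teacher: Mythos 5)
Your proposal is correct and follows essentially the same route as the original proof (this theorem is imported here from Subsection 3.3 of the cited paper of Checcoli, which likewise realises $\text{Gal}(K_m/\IQ)$ as the central product of $m$ copies of $G$, i.e.\ the quotient of $G^m\cong\text{Gal}(L_1\cdots L_m/\IQ)$ by a hyperplane of its centre, and derives the non-generation statement from the fact that a subgroup of an extraspecial group of order $p^{2m+1}$ not containing the centre is isomorphic to a totally isotropic subspace of the symplectic quotient, hence has index at least $p^{m+1}$). Only a cosmetic remark: the fields $K_m$ you construct are not nested, so $K$ should be described as the compositum rather than the union $\bigcup_m K_m$, which changes nothing in the argument.
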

Let $G$ and $L_i$ be as in Theorem \ref{q5}. 
As $L_{i+1}/\IQ$ is Galois we have $L_{i+1}$ and $L_1\cdots L_{i}$ are linearly disjoint over $\IQ$ if and only if 
$L_1\cdots L_{i}\cap L_{i+1}=\IQ$ (see, e.g., p.35 in \cite{FriedJarden}). Clearly the group $G$
is solvable. 

We apply Theorem \ref{main} with $G_m=G$ to get a family of fields $L_i$ as in Theorem \ref{q5} 
whose compositum has property (N).
Then the field $K$ of Theorem \ref{q5} also has property (N)
but the finite subextensions of $K/\IQ$ cannot be generated by elements of uniformly bounded degree.

\section{Some remarks on properties concerning polynomial mappings}\label{Properties}
In this section we discuss some properties for fields that were introduced by Liardet and
Narkiewicz to study polynomial mappings. We will also answer some open problems proposed by Narkiewicz.\\

Let $K$ be a field and $n$ a positive integer.
Following Narkiewicz \cite{Narkiewicz} we say a polynomial mapping $f:K^n\rightarrow K^n$ sending $(x_1,...,x_n)\rightarrow
(f_1(x_1,...,x_n),...,f_n(x_1,...,x_n))$
is called \emph{admissible} if none of the $f_i$ are linear
and their leading forms do not
have a nontrivial common zero in the algebraic closure of $K$ (here, as in \cite{Narkiewicz}, linear polynomials 
includes constants, otherwise no infinite field could have property (SP) from Definition \ref{P}; just take $f(x,y)=(x+y^2,1)$ and $X=K\times\{1\}$). 

The following properties were introduced by Narkiewicz in \cite{Narkiewicz}.
\begin{definition}[Narkiewicz]\label{P}
A field $K$ is said to have property (SP) if for every $n$ and for every admissible
polynomial mapping  $f:K^n\rightarrow K^n$ the conditions 
$X\subseteq K^n$, $f(X)=X$ imply the finiteness of $X$. If this implication holds in
the case $n=1$ then $K$ has property (P).
\end{definition} 
Clearly an algebraically closed field cannot have property (P) (just take
$X=K$). On the other hand any number field
has property (P), as shown in the early paper \cite{Narkiewicz62} of Narkiewicz.

Nowadays it is well-known that for subfields of $\Qbar$ property (N) implies property (P)
(see p.534 in \cite{Dvo_Zan} or  Theorem 3.1 in \cite{Dvo_Zan2}). And, as remarked in \cite{Dvo_Zan},
the same holds for property (SP).
Dvornicich and Zannier used this, combined with the results on property (N),
to answers some open problems raised by Narkiewicz in \cite{Narkiewicz}, e.g., 
``Give a constructive description of
fields with property (P) or property (SP)'' or ``does 
$\IQ^{(2)}$ have property
(P) or (SP)?''. In fact Narkiewicz conjectured already in 1963 (see \cite{Narkiewicz63}, and also \cite{Narkiewicz71} Problem 10 (i)) that 
$\IQ^{(d)}$ has property (P) for any $d$, 
and for $d>2$ this old conjecture is still open.  

In \cite{HK-N} it was shown that property (SP) is preserved under finite
extensions. 
Another interesting question proposed in \cite{Narkiewicz} is whether (P) is also preserved under finite 
extensions of fields and moreover, whether property (P) is equivalent to property (SP). 
Dvornicich and Zannier in \cite{Dvo_Zan} also answered both of these questions in the negative by giving an ingenious construction
in the cyclotomic closure of $\IQ$. 
\begin{theorem}[Dvornicich-Zannier, 2007]\label{DvornicichZannier}
Let $p$ be a prime such that $p-1$ has an odd prime factor $l$. Let $K'$ be the
field generated by the roots of unity of $p$-power order and let $K$ be the 
unique subfield of $K'$ such that $[K':K]=l$. Then $K$ has property (P), $K'$ does
not have property (P) and $K$ does not have property (N).
\end{theorem}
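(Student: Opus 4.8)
The plan is to treat the three assertions separately, as they rely on quite different ideas. The claim that $K$ does \emph{not} have property (N) is the easy one: the field $K'$ generated by all $p$-power roots of unity has all local degrees at $p$ equal to arbitrarily large powers of $p$, but away from $p$ it is unramified, so in fact $K'$ (and a fortiori its finite-index subfield $K$) contains infinitely many elements of bounded height — for instance the roots of unity themselves, or one can use that $\IQ^{\mathrm{cyc}}$ fails (N) for the standard reason (infinitely many roots of unity of height $0$). So this reduces to observing that $K$ contains infinitely many roots of unity, which is immediate since $K\supseteq \IQ$ is infinite inside $\IQ^{\mathrm{cyc}}$ and one checks $[K:\IQ]=\infty$ (indeed $[K':\IQ]=\infty$ and $[K':K]=l<\infty$). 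The fact that $K$ does not have property (N) despite this is compatible with the rest, since property (P) is strictly weaker than (N).

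The substantive parts are that $K$ \emph{has} property (P) while $K'$ does \emph{not}. For the negative statement about $K'$, I would exhibit an admissible polynomial $f:K'\to K'$ of degree $\ge 2$ and an infinite $f$-invariant set $X\subseteq K'$. The natural candidate exploits that $K'$ contains $\zeta_{p^n}$ for all $n$ together with the fact that raising to a suitable power permutes these; one wants a polynomial $f$ with $f(\zeta_{p^{n+1}})=\zeta_{p^n}$ (or some orbit structure) so that the set of all $p$-power roots of unity — which is infinite — is forward-invariant, or becomes invariant after passing to the union of an orbit. The delicate point is \emph{admissibility}: $f$ must be nonlinear and (automatically, in dimension one) have nonvanishing leading coefficient, which is no constraint, so really one only needs a genuinely nonlinear $f$ of degree $\ge 2$ whose iteration keeps an infinite set inside $K'$. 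One should use here the specific structure $l\mid p-1$: the subgroup of index $l$ in $(\IZ/p^n)^\times$ is what governs which power maps are "available," and the construction of Dvornicich–Zannier builds $f$ precisely so that its value on appropriate roots of unity lands back in the smaller field or in $K'$.

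For the positive statement that $K$ has property (P), I expect the main obstacle to lie. One cannot invoke "(N) $\Rightarrow$ (P)" since $K$ fails (N). Instead one argues directly: given an admissible $f\in K[x]$ of degree $\ge 2$ and a set $X\subseteq K$ with $f(X)=X$, one must show $X$ is finite. The strategy is a height/ramification argument: for $x\in X$ of large height, $h(f(x))\ge 2h(x)-C$ forces, along an infinite invariant set, a contradiction \emph{unless} the heights stay bounded; and if heights stay bounded one uses that the bounded-height elements of $K$ are essentially roots of unity (the Bogomolov-type input — $K$, being inside the cyclotomic closure, has the Bogomolov property), so $X$ up to finitely many elements consists of roots of unity lying in $K$, and then one must rule out that an admissible $f$ of degree $\ge 2$ permutes an infinite set of roots of unity \emph{contained in the index-$l$ subfield $K$}. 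This last step is exactly where the hypothesis "$[K':K]=l$, $l$ an odd prime factor of $p-1$" is used: the roots of unity in $K$ form a group on which the available power maps are constrained, and one shows no nonlinear polynomial over $K$ can cyclically permute infinitely many of them while staying inside $K$. I would carry the steps out in the order: (i) $K$ fails (N); (ii) construct the bad $f$ and $X$ for $K'$; (iii) reduce property (P) for $K$ to the root-of-unity case via a height dichotomy and the Bogomolov property; (iv) close (iii) using the subgroup structure forced by $l$. The crux — and the part requiring genuine cleverness, following Dvornicich–Zannier — is the explicit construction in (ii) and the matching obstruction argument in (iv), which together pin down why the \emph{same} cyclotomic mechanism that breaks (P) over $K'$ is blocked over the subfield $K$.
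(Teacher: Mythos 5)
First, a point of orientation: this statement is not proved in the paper at all --- it is imported verbatim from Dvornicich and Zannier's paper \cite{Dvo_Zan}, where the construction occupies a substantial part of that article; so any proof attempt must essentially reproduce their argument. Your sketch, as it stands, contains concrete errors. The most serious is the claim that $K$ contains infinitely many roots of unity. It does not: for $p$ odd, $\mathrm{Gal}(K'/\IQ(\zeta_{p^n}))\cong 1+p^n\mathbb{Z}_p$ is torsion-free, so the order-$l$ subgroup $H$ with fixed field $K$ fixes no $\zeta_{p^n}$, $n\geq 1$, and the only roots of unity in $K$ are $\pm1$. Moreover, were your claim true it would contradict the theorem itself: a field with infinitely many roots of unity has infinitely many preperiodic points of $x\mapsto x^2$ and therefore fails (P) (exactly the argument the paper uses for $L_1L_2$ in its final corollary), whereas $K$ is asserted to have (P). The actual reason $K$ fails (N) is the presence of infinitely many \emph{non-torsion} elements of bounded height, for instance the traces $\sum_{\sigma\in H}\sigma(\zeta_{p^n})$, which lie in $K$, are pairwise distinct, and have height at most $\log l$.

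The same point undermines your steps (iii)--(iv) for property (P) of $K$. The Bogomolov property only controls elements of height below a fixed threshold $T_0$; a bounded-height invariant set may consist of elements with height between $T_0$ and $2c_f$, and $K$ has infinitely many such elements (this is precisely why it fails (N)), so your dichotomy does not reduce the problem to roots of unity --- and the residual case (iv), concerning infinitely many roots of unity inside $K$, is vacuous since there are only finitely many. The genuinely hard content of the theorem is exactly the analysis of polynomial relations among these small-height non-torsion elements (sums of $l$ roots of unity of $p$-power order), which Dvornicich and Zannier handle via their results on cyclotomic solutions of polynomial equations; your proposal defers this to ``following Dvornicich--Zannier'' and hence does not constitute a proof. (The easy part is fine in spirit: $K'$ fails (P) simply because $f(x)=x^p$ maps the infinite set of $p$-power roots of unity onto itself, with no admissibility subtlety in dimension one.)
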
 
Instead of considering $f(X)=X$ as in property (P) one might go one step further and ask whether two polynomials $f_1, f_2 \in K[x]$ 
acquiring the same value set on an infinite
subset $X$ of $K$ necessarily have the same degree. For $K=\IQ$ a negative answer to this question was given by Kubota 
in \cite{Kubota72}.
Still with $K=\IQ$, Kubota also showed that if $\deg f_1<\deg f_2$, $f_1(X)\subseteq f_2(X)$ and the restriction of $f_1$ to $X$ is injective
then $X$ must be finite. This motivates the following definition introduced by Narkiewicz in \cite{Narkiewicz} on p.95.
\begin{definition}[Narkiewicz]
A field $K$ is said to have property (K) if for every $n$ the following implication holds true.
Let $\Phi:K^n\rightarrow K^n$ be an admissible
polynomial mapping and let $\Psi:K^n\rightarrow K^n$ be another polynomial mapping. Denote by $d$ the 
minimum of the degrees of the polynomials defining $\Phi$ and by $D$ the maximum of the degrees of the 
polynomials defining $\Psi$. If $d>D$, X is a subset of $K^n$ satisfying $\Psi(X)\subseteq \Phi(X)$
and the restriction of $\Psi$ to $X$ is injective, then $X$ is finite. 
\end{definition} 
In \cite{Liardet71} Liardet has introduced another property related to property (P).
We denote by $\overline{K}$ an algebraic closure of the field $K$.
\begin{definition}[Liardet]\label{Liard_p_bar}
A field $K$ is said to have property ($\overline{\text{P}}$) if for any non-linear
polynomial $f\in \overline{K}[x]$ and every subset $X\subseteq \overline{K}$ consisting of elements of uniformly bounded degree
over $K$, the inclusion $X\subseteq f(X)$ implies
the finiteness of $X$.
\end{definition} 
\begin{remark}\label{LN}
In \cite{Narkiewicz} Narkiewicz gives a different definition of property ($\overline{\text{P}}$), where 
$f\in \overline{K}[x]$ and $X\subseteq f(X)$ are replaced by $f\in K[x]$ and $f(X)=X$. He claims (see Theorem 10.7 of \cite{Narkiewicz}) that 
it follows immediately from the definition that property ($\overline{\text{P}}$) is preserved under finite extensions.
This is obvious for Liardet's original definition but we have not been able to verify this for Narkiewicz's definition. 
Narkiewicz also asks whether, with this definition, property ($\overline{\text{P}}$) is equivalent to property (P) (see  Problem XIX of \cite{Narkiewicz}).
However, due to Theorem \ref{DvornicichZannier} the answer to this question is certainly no, provided ($\overline{\text{P}}$) is preserved under finite extensions.
\end{remark}
An analogue of property (P) for rational functions was introduced by Narkiewicz in \cite{Narkiewicz}.
\begin{definition}[Narkiewicz]
A field $K$ is said to have property (R) if the following implication holds true:
if  $f$ is a rational function in $K(x)$ and there exists an infinite subset $X$ of
$K$ with $f(X)=X$
then there exist $\alpha, \beta, \gamma, \delta \in K$ such that $f(x)=(\alpha
x+\beta)/(\gamma x+\delta)$.
\end{definition} 
In fact a similar version of property (R) with 
$f(X)=X$ replaced by $X\subseteq f(X)$ was already introduced earlier by Liardet in \cite{Liardet71}.

Next we prove a simple lemma 
which could also be quickly deduced from Lemma 9.2. b) in \cite{Narkiewicz}.

Let us recall some basic facts.
For every rational function $f(x)\in \Qbar(x)$ there exists an effectively computable
constant $c_f>0$ depending only on $f$ such that
$h(f(x))\geq \deg f h(x)-c_f$ for all $x\in \Qbar$ not poles of $f$ (see, e.g., Corollary 3.3 in \cite{Zannier}). This implies
that if $\deg f\geq 2$ then $h(f(x))\geq (3/2)h(x)$ for 
any $x\in \Qbar$ with $h(x)\geq 2c_f$ and which is not a pole of $f$. 
\begin{lemma}\label{propinvfinite}
Suppose $f(x)\in\Qbar(x)$ of degree at least $2$, and suppose $X\subseteq \Qbar$ has property (N) and  is such that
$X\subseteq f(X)$.
Then $h(\alpha)\leq 2c_f$ for any $\alpha\in X$, in particular, $X$ is finite.
\end{lemma}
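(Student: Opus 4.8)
The plan is to argue by contradiction: suppose some $\alpha_0\in X$ has $h(\alpha_0)>2c_f$ and derive a contradiction from property (N). Once we know $h(\alpha)\le 2c_f$ for all $\alpha\in X$, finiteness is immediate, since then $X=\{x\in X: h(x)<2c_f+1\}$, which is finite by property (N).

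First I would use $X\subseteq f(X)$ to build an infinite backward orbit inside $X$: choose $\alpha_1,\alpha_2,\dots\in X$ with $f(\alpha_{i+1})=\alpha_i$ for all $i\ge 0$. For $i\ge 0$ the point $\alpha_{i+1}$ is not a pole of $f$ (its image $\alpha_i$ is finite), so the inequality $h(f(x))\ge \deg f\cdot h(x)-c_f$ applies and gives $h(\alpha_{i+1})\le (h(\alpha_i)+c_f)/\deg f\le (h(\alpha_i)+c_f)/2$. Since $h(\alpha_0)>2c_f>c_f$, an easy induction yields $h(\alpha_i)\le h(\alpha_0)$ for every $i$. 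Hence all $\alpha_i$ lie in $X(h(\alpha_0)+1)$, a finite set by property (N), so the sequence $(\alpha_i)_i$ takes only finitely many values.

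The key point is then that $\alpha_0$ itself must be a periodic point of $f$. Pick $i<j$ with $\alpha_i=\alpha_j$; applying $f$ and using $f(\alpha_{k+1})=\alpha_k$ we get $\alpha_{i-1}=f(\alpha_i)=f(\alpha_j)=\alpha_{j-1}$, and inductively $\alpha_0=\alpha_m$ with $m:=j-i\ge 1$. Reading these relations in the forward direction, $f^{(m)}(\alpha_0)=\alpha_0$, and the $f$-orbit of $\alpha_0$ is exactly $\{\alpha_0,\alpha_1,\dots,\alpha_{m-1}\}$ with $f(\alpha_{s+1})=\alpha_s$ for $0\le s\le m-1$ (indices cyclic, $\alpha_m:=\alpha_0$); none of these points is a pole of $f$. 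Now I would run the basic height inequality around this cycle: for each $s$, $h(\alpha_{s+1})\le (h(\alpha_s)+c_f)/2\le (M+c_f)/2$, where $M:=\max_{0\le s\le m-1}h(\alpha_s)$. Since every orbit element occurs as some $\alpha_{s+1}$, taking the maximum forces $M\le (M+c_f)/2$, i.e.\ $M\le c_f$. In particular $h(\alpha_0)\le M\le c_f<2c_f$, contradicting the choice of $\alpha_0$.

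The main obstacle — or rather the one genuinely non-routine observation — is recognizing that the backward orbit does not merely become eventually periodic but actually loops back to $\alpha_0$ itself; this is what upgrades ``some far-down term of the orbit has small height'' to ``$h(\alpha_0)$ is small'', which is what we need. Everything else is routine: the two directions of the height/degree inequality, the finiteness input from property (N), and the elementary averaging (maximum) trick on the cycle. Pole-of-$f$ issues are harmless, since every element of the backward orbit, and a fortiori every cycle element, is a value of $f$ and hence not a pole, so I would mention this only in passing.
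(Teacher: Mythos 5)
Your proof is correct, but it takes a genuinely different route from the paper's. The paper argues with \emph{forward} orbits: it splits $X$ into the set $X_0$ of points whose iterated images have heights tending to infinity and its complement $X_1$, notes that every element of $X_1$ has height at most $2c_f$ (by the lower bound $h(f(x))\geq \tfrac32 h(x)$ once $h(x)\geq 2c_f$), shows via $X\subseteq f^{(n)}(X)$ that $X_0=f^{(n)}(X_0)\cap X$ for all $n$, and then derives a contradiction with property (N) when $X_0\neq\emptyset$ and $n$ is large — a step the paper only sketches and which needs a small uniformity argument over a finite low-height subset of $X_0$. You instead exploit the fact that a \emph{backward} orbit can be chosen inside $X$ (which forward orbits need not be): the inequality $h(\alpha_{i+1})\leq (h(\alpha_i)+c_f)/2$ keeps the backward orbit in a set of bounded height, property (N) plus pigeonhole forces a repetition, applying $f$ repeatedly pulls the repetition back to $\alpha_0$ itself, and the maximum trick around the resulting cycle gives $M\leq c_f$. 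Your argument is more explicit and self-contained (a single, clean application of Northcott), and it in fact yields the sharper bound $h(\alpha)\leq c_f$ for all $\alpha\in X$; the paper's dichotomy argument avoids the periodicity step and generalizes naturally to statements about escaping sets, but as written leaves more to the reader in the final contradiction. All the details you give (non-pole observations, the induction bounding $h(\alpha_i)$ by $h(\alpha_0)$, and the cyclic indexing so that every cycle element is some $\alpha_{s+1}$) check out.
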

\begin{proof}
First note that $X\subseteq f^{(n)}(X)$ for all $n$.
Let 
\begin{alignat*}1
X_0&=\{\alpha\in X|h(f^{(n)}(\alpha))\rightarrow \infty \text{ as }n \rightarrow \infty\},\\
X_1&=X\backslash X_0. 
\end{alignat*}
Each $\alpha$ in $X_1$ has height at most $2c_f$, and so it suffices to show that the set $X_0$ is empty. 
Clearly $f^{(n)}(X_0)\cap X\subseteq X_0$ and $f^{(n)}(X_1)\cap X \subseteq X_1$, and
thus $f^{(n)}(X_0)\cap X=X_0$ for all $n$. Not forgetting the Northcott property we see that the latter is 
a contradiction if $X_0$ is non-empty and $n$ is large enough.
\end{proof}
The next result collects all the  known implications regarding the field properties introduced.
\begin{theorem}\label{propertiesimplications}
For arbitrary fields we have the following implications:
\begin{alignat*}1
\text{(K)}&\Rightarrow \text{(SP)}\Rightarrow \text{(P)}, \\
\text{($\overline{\text{P}}$)}&\Rightarrow \text{(P)}, \\
\text{(R)}&\Rightarrow \text{(P)}.
\end{alignat*}
Moreover, for subfields of $\Qbar$ property (N) implies all of the properties
(K), (SP), ($\overline{\text{P}}$), (R) and (P).
\end{theorem}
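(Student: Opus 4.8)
The plan is to establish the ``easy'' implications among the abstract field properties first, and then handle the consequences of property (N) separately, the latter being the part where the earlier results in the paper (especially Lemma \ref{propinvfinite} and Lemma \ref{propertiesimplications}'s precursors) do the real work.

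\emph{The implications among (K), (SP), (P), ($\overline{\text{P}}$), (R).} The implication $\text{(SP)}\Rightarrow\text{(P)}$ is immediate since property (P) is literally the $n=1$ case of property (SP). For $\text{(K)}\Rightarrow\text{(SP)}$, given an admissible $f:K^n\to K^n$ and $X\subseteq K^n$ with $f(X)=X$, apply property (K) with $\Phi=f$ and $\Psi=\mathrm{id}$: the identity has degree $1<d$ (here $d\ge 2$ because $f$ admissible forces every component to be nonlinear), $\Psi(X)=X=f(X)=\Phi(X)$, and $\Psi|_X$ is trivially injective, so $X$ is finite. For $\text{($\overline{\text{P}}$)}\Rightarrow\text{(P)}$: if $f\in K[x]\subseteq\overline{K}[x]$ is nonlinear and $X\subseteq K$ satisfies $f(X)=X$, then in particular $X\subseteq f(X)$ and $X$ consists of elements of degree $1$ over $K$, hence of uniformly bounded degree; property ($\overline{\text{P}}$) (in Liardet's form, Definition \ref{Liard_p_bar}) gives finiteness of $X$. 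For $\text{(R)}\Rightarrow\text{(P)}$: a nonlinear polynomial $f\in K[x]$ is in particular a rational function of degree $\ge 2$, so if $f(X)=X$ for an infinite $X\subseteq K$, property (R) would force $f$ to be a Möbius transformation, contradicting $\deg f\ge 2$; hence $X$ is finite.

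\emph{Property (N) implies the rest, for subfields $K$ of $\Qbar$.} By the implications just proved it suffices to check (K), (SP) and (R); (P) and ($\overline{\text{P}}$) then follow, except that ($\overline{\text{P}}$) in Liardet's form involves elements of bounded degree over $K$ rather than elements of $K$, so it must be argued directly. For (R) and ($\overline{\text{P}}$) the engine is Lemma \ref{propinvfinite}: if $K$ has property (N) and $f\in K(x)$ (resp.\ $f\in\overline{K}[x]$) has degree $\ge 2$ with $f(X)=X$ (resp.\ $X\subseteq f(X)$), then in the (R) case $X\subseteq K\subseteq\Qbar$ has property (N) and $X\subseteq f(X)$, so the lemma bounds the height on $X$ and forces $X$ finite; in the ($\overline{\text{P}}$) case $X$ consists of elements of bounded degree over $K$, hence (by Northcott's theorem, since $K$ has property (N) the field $K$ has bounded-degree extensions all having property (N)—more simply, a bounded-degree extension of a property-(N) field has property (N) by the criterion-free elementary argument) the set $X$ sits inside a property-(N) set, and again Lemma \ref{propinvfinite} applies. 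For (SP) and (K) one uses the known fact, recalled in the excerpt (see p.\,534 of \cite{Dvo_Zan} or Theorem 3.1 of \cite{Dvo_Zan2}, and the remark in \cite{Dvo_Zan} that the same argument handles (SP)), that property (N) implies (SP); the point is that an admissible polynomial map $f$ with $f(X)=X$ forces the coordinates of points of $X$ to have bounded height, because the ``no common zero of the leading forms'' condition makes $h(f(x))$ grow like $\max_i h(x_i)$ up to an additive constant. For (K), given $\Phi$ admissible, $\Psi$ of strictly smaller degree, $\Psi(X)\subseteq\Phi(X)$ with $\Psi|_X$ injective: iterating, one shows the heights of points in the $\Psi$-preimage chain would have to grow under $\Phi$ faster than $\Psi$ can account for, contradicting property (N) unless $X$ is finite—this is exactly the Kubota-type argument, and it is encapsulated in the cited Dvornicich–Zannier results.

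\emph{Main obstacle.} The routine implications are genuinely routine; the delicate point is the precise deduction ``(N) $\Rightarrow$ (SP)'' and ``(N) $\Rightarrow$ (K)'' for polynomial maps in several variables, where one needs the admissibility hypothesis to convert a height inequality in one variable (the kind provided by Lemma \ref{propinvfinite} and its ambient height estimate $h(f(x))\ge \deg f\, h(x)-c_f$) into a height inequality for the vector-valued map. Rather than reprove this, the cleanest route is to invoke the Dvornicich–Zannier statements directly for (SP) (and note (K) reduces to their circle of ideas), and to give the short self-contained arguments for (R) and ($\overline{\text{P}}$) via Lemma \ref{propinvfinite}, taking care in the ($\overline{\text{P}}$) case to note that a set of elements of bounded degree over a property-(N) field $K\subseteq\Qbar$ still has property (N).
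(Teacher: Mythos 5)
Your treatment of the formal implications and of (R) and ($\overline{\text{P}}$) follows essentially the paper's route: the same $\Psi=\mathrm{id}$ trick for (K) $\Rightarrow$ (SP), and Lemma \ref{propinvfinite} applied to a set that has property (N) for (R) and ($\overline{\text{P}}$). For ($\overline{\text{P}}$), though, be careful with your justification: what you need is that the single set of \emph{all} algebraic numbers of degree at most $d$ over $K$ has property (N) (this is Theorem 2.1 of \cite{Dvo_Zan2}, which is what the paper cites); the statement ``a bounded-degree extension of a property-(N) field has property (N)'' does not suffice, since $X$ need not lie in one fixed finite extension of $K$. The symmetric-function (Northcott-style) argument does prove the correct statement, so this is a fixable imprecision rather than an error, but as phrased the reduction is off.

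The genuine gap is in the step (N) $\Rightarrow$ (K). You neither prove it nor cite a source that contains it: the Dvornicich--Zannier references establish (N) $\Rightarrow$ (P) and, as they remark, (SP), but they do not treat property (K), and your one-sentence sketch (``iterating, the heights in the $\Psi$-preimage chain would have to grow faster than $\Psi$ can account for'') is not an argument, in particular it never explains where the injectivity of $\Psi|_X$ or the admissibility of $\Phi$ in several variables enter. The paper closes exactly this point by quoting Theorem 2 and Corollary 1 of Halter-Koch--Narkiewicz \cite{HK-N} (where (K) appears as property (VP)), together with the easy observation that property (N) for $K$ gives property (N) for $K^n$ with respect to the height $h_n$; since (K) is the strongest property in the chain (K) $\Rightarrow$ (SP) $\Rightarrow$ (P), this is the load-bearing citation, and your proposal leaves it unsupported. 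Either reproduce the Kubota/Halter-Koch--Narkiewicz height argument in full or cite \cite{HK-N} for it; as written, the (K) part of the theorem is not established.
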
 
\begin{proof}
The implications (SP) $\Rightarrow$ (P), ($\overline{\text{P}}$) $\Rightarrow$ (P) and  (R) $\Rightarrow$ (P) are obvious.
Next we prove that property (K) implies property (SP). 
To this end we take $\Psi=id$ the identity so that $\Psi$ is injective on any subset of $K^n$ and $D=1$. 
Now let $f=\Phi:K^n\rightarrow K^n$ be any admissible polynomial mapping. Hence $d>D$. Suppose 
$X$ is a subset of $K^n$ satisfying $f(X)=X$, and therefore, $X=\Psi(X)\subseteq \Phi(X)=f(X)=X$. 
As $K$ has property (K) we conclude $X$ is finite and therefore $K$ has also property (SP). \\

Now suppose $K$ is a subfield of $\Qbar$ with property (N).\\

Property (K) for the field $K$ follows immediately from Theorem 2 and Corollary 1 in \cite{HK-N} (the reader should be warned: in \cite{HK-N} the authors write property (VP) for property (K)), by observing that, if $K$ has property (N) and
$h_n:\Qbar^n\rightarrow [0,\infty)$ is the usual absolute non-logarithmic Weil height, then of course
also $K^n$, with respect to $h_n$, has property (N). 

Next we will show that $K$ has property ($\overline{\text{P}}$).
To this end suppose $f\in \Qbar[x]$ is a polynomial of degree at least $2$, let
$X\subseteq \Qbar$ be a set of elements of uniformly 
bounded degree over $K$ and suppose $X\subseteq f(X)$. We shall prove that 
$X$ is finite. By Theorem 2.1 of \cite{Dvo_Zan2} the set $X$ has property (N). Hence we
can apply Lemma \ref{propinvfinite} and the claim follows.

Finally, we will show that $K$ has property (R). So let $f$ be in $K(x)$ and not of the form
$(\alpha x+\beta)/(\gamma x+\delta)$ for 
$\alpha, \beta, \gamma, \delta \in K$. This means $\deg f\geq 2$. Now suppose
$X\subseteq K$ with $f(X)=X$. By Lemma \ref{propinvfinite}
we conclude that $X$ is finite and this proves that $K$ has property (R). 
\end{proof}
In \cite{Narkiewicz} Narkiewicz addressed also the following questions: \\
Is property ($\overline{\text{P}}$) equivalent to property (P)? ({\bf Problem XIX}).\\
Is property (K) equivalent to property (P)? ({\bf Problem XX}).\\
Although not explicitly mentioned in Dvornicich and Zannier's work, Theorem \ref{DvornicichZannier},
combined with older results of Liardet, Halter-Koch and Narkiewicz,  answers both of these questions.
\begin{corollary}\label{corDvornicichZannier}
Let $K$ be as in Theorem \ref{DvornicichZannier}. Then $K$ has property (P) but
none of the properties (K), (SP),
($\overline{\text{P}}$).
\end{corollary}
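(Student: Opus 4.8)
The plan is to read off everything from Theorem \ref{DvornicichZannier} together with the implications already established in Theorem \ref{propertiesimplications} and a handful of older structural facts. First, the field $K$ from Theorem \ref{DvornicichZannier} has property (P): this is precisely one of the conclusions of that theorem, so nothing needs to be done here. The substance of the corollary is the negative part, namely that $K$ fails each of (K), (SP), and $(\overline{\text{P}})$.

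For property (SP) the key input is that (SP) is preserved under finite extensions, which is the result of \cite{HK-N} quoted in the introduction. Theorem \ref{DvornicichZannier} gives a finite extension $K'/K$ of degree $l$ with $K'$ \emph{not} having property (P); since (SP) $\Rightarrow$ (P) by Theorem \ref{propertiesimplications}, the field $K'$ does not have property (SP) either. If $K$ had property (SP), then by the finite-extension stability of (SP) the field $K'$ would also have property (SP), a contradiction. Hence $K$ does not have property (SP). Since (K) $\Rightarrow$ (SP) by Theorem \ref{propertiesimplications}, the field $K$ cannot have property (K) either.

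It remains to rule out property $(\overline{\text{P}})$, and here I would use Liardet's definition (Definition \ref{Liard_p_bar}): property $(\overline{\text{P}})$ is visibly preserved under finite extensions, since a set of elements of uniformly bounded degree over $K'$ is also a set of elements of uniformly bounded degree over $K$ (the degree over $K$ is at most $[K':K]$ times the degree over $K'$), and the class of admissible $f\in\overline{K}[x]=\overline{K'}[x]$ is the same. Also $(\overline{\text{P}})\Rightarrow(\text{P})$ by Theorem \ref{propertiesimplications}. So the same argument as for (SP) applies verbatim: if $K$ had property $(\overline{\text{P}})$, then $K'$ would too, hence $K'$ would have property (P), contradicting Theorem \ref{DvornicichZannier}. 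Therefore $K$ does not have property $(\overline{\text{P}})$.

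The only genuinely delicate point is the one already flagged in Remark \ref{LN}: whether ``property $(\overline{\text{P}})$'' is taken in Liardet's sense or in Narkiewicz's sense, since the stability under finite extensions is transparent for the former but unclear for the latter. I would therefore make explicit in the write-up that the statement refers to Liardet's Definition \ref{Liard_p_bar}, so that the finite-extension stability is immediate and the argument goes through cleanly. No computation beyond the trivial degree bound above is required.
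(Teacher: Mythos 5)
Your proposal is correct and follows essentially the same route as the paper: $K$ has (P) by Theorem \ref{DvornicichZannier}, while each of (K), (SP), ($\overline{\text{P}}$) implies (P) and is preserved under finite extensions (via \cite{HK-N} for (SP)/(K) and via Liardet's Definition \ref{Liard_p_bar} as in Remark \ref{LN}), so none can hold for $K$ since $K'$ lacks (P). The only cosmetic difference is that you dispose of (K) through the implication (K) $\Rightarrow$ (SP) rather than through its own finite-extension stability as the paper does, which is an equally valid shortcut.
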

\begin{proof}
By Theorem \ref{DvornicichZannier} the field $K$ has property (P)
and the finite extension $K'$ of $K$ does not have property (P).
By Theorem \ref{propertiesimplications} each of the properties (K), (SP) and ($\overline{\text{P}}$) implies property (P).
Moreover, the properties (K), (SP) and ($\overline{\text{P}}$) are preserved under finite
extensions; for property (K) and (SP) this was shown in \cite{HK-N} (see Theorem 1), and
for property ($\overline{\text{P}}$) see Remark \ref{LN}. Hence
$K$ cannot have any of the properties (K), (SP) or ($\overline{\text{P}}$). 
\end{proof}
Narkiewicz also addresses the problem
to give a constructive description of
fields with property (R), and more specifically: does $\IQ^{(2)}$ have property (R)? (\cite{Narkiewicz} {\bf Problem XVIII}).
We already know that $\IQ^{(2)}$ has property (N), and therefore, by Theorem \ref{propertiesimplications}, it
has all of the properties in question.
\begin{corollary}
The field $\IQ^{(2)}$ has the properties (N), (K), (SP), ($\overline{\text{P}}$), (R) and (P). 
\end{corollary}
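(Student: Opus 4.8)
The plan is to deduce everything from Theorem \ref{propertiesimplications}, so that the only point requiring an argument is that $\IQ^{(2)}$ itself has property (N); all the remaining properties then come for free.

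First I would record the elementary observation that every quadratic extension of $\IQ$ is abelian over $\IQ$, hence the compositum $\IQ^{(2)}$ of all of them is an abelian extension of $\IQ$. Consequently $\IQ^{(2)}$ coincides with the maximal abelian subextension of $\IQ^{(2)}/\IQ$, i.e.\ ${\IQ^{(2)}}_{\text{ab}}=\IQ^{(2)}$. Now Theorem \ref{BZ} of Bombieri and Zannier asserts precisely that ${\IQ^{(2)}}_{\text{ab}}$ has property (N), so $\IQ^{(2)}$ has property (N). (Alternatively, one could invoke Theorem \ref{d_ab} with $K=\IQ$ and $d=2$, noting that $\text{Gal}(\IQ^{(2)}/\IQ)$ has finite exponent and using again $\IQ^{(2)}={\IQ^{(2)}}_{\text{ab}}$; but the direct appeal to Theorem \ref{BZ} is shortest.)

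Having established that $\IQ^{(2)}$ is a subfield of $\Qbar$ with property (N), I would simply invoke the second assertion of Theorem \ref{propertiesimplications}, which says that for subfields of $\Qbar$ property (N) implies all of (K), (SP), ($\overline{\text{P}}$), (R) and (P). Applying this to $K=\IQ^{(2)}$ yields the claim.

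There is essentially no obstacle: the corollary is a formal consequence of Theorem \ref{BZ} and Theorem \ref{propertiesimplications}. The only spot where a word is genuinely needed is the identification $\IQ^{(2)}={\IQ^{(2)}}_{\text{ab}}$, which is what makes Bombieri and Zannier's theorem directly applicable here; without it one would not immediately know that $\IQ^{(2)}$ (rather than merely its abelian part) has property (N).
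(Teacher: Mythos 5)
Your proof is correct and follows essentially the same route as the paper: the paper also deduces the corollary by noting that $\IQ^{(2)}$ has property (N) (the $d=2$ case of Bombieri--Zannier, which holds precisely because $\IQ^{(2)}$ is abelian over $\IQ$, so $\IQ^{(2)}={\IQ^{(2)}}_{\text{ab}}$) and then invoking Theorem \ref{propertiesimplications}. Your write-up merely makes explicit the identification $\IQ^{(2)}={\IQ^{(2)}}_{\text{ab}}$, which the paper takes as known from the introduction.
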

Clearly each number field has properties (K), (SP) and ($\overline{\text{P}}$).
It is also known that these properties are preserved under finite extension.
For fields of algebraic numbers one could therefore ask whether these properties are even preserved
under taking the compositum of two fields. Regarding this question we have the following answer.
\begin{corollary}
For fields of algebraic numbers the properties (N), (K), (SP), ($\overline{\text{P}}$), (R) and (P) are in general not preserved under 
taking the compositum of two fields. 
\end{corollary}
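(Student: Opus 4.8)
The plan is to exhibit a single pair of algebraic number fields $E_1,E_2$, each with property (N), whose compositum $E_1E_2$ fails property (P). This settles all six properties at once: by Theorem \ref{propertiesimplications} each of $E_1,E_2$ has, besides (N), also (K), (SP), ($\overline{\text{P}}$), (R) and (P); while $E_1E_2$, lacking (P), also lacks (K), (SP), ($\overline{\text{P}}$), (R) and (N), since each of these implies (P) by Theorem \ref{propertiesimplications}.

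For the construction, let $q_1<q_2<\cdots$ be the odd primes and fix primes $n_1<n_2<\cdots$ with $n_i>2^{q_i^3}$ for all $i$ (so in particular $n_i$ exceeds all the $q_j$ with $j\le i$ and all the $n_j$ with $j<i$), and put
\[
E_1=\mathbb{Q}\bigl(n_i^{1/q_i}:i\ge 1\bigr),\qquad E_2=\mathbb{Q}\bigl(\zeta_{q_i}n_i^{1/q_i}:i\ge 1\bigr),
\]
where $n_i^{1/q_i}\in\mathbb{R}$ is the positive root; these are distinct, since $E_1\subseteq\mathbb{R}$ whereas $\zeta_{q_1}n_1^{1/q_1}\in E_2$ is not real. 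As $x^{q_i}-n_i$ is Eisenstein at $n_i$, both $n_i^{1/q_i}$ and $\zeta_{q_i}n_i^{1/q_i}$ have degree $q_i$ over $\mathbb{Q}$; and since the $q_i$ are pairwise coprime, the nested towers $K_0=\mathbb{Q}\subsetneq K_i=\mathbb{Q}(n_1^{1/q_1},\dots,n_i^{1/q_i})$, and its analogue for $E_2$, satisfy $[K_i:K_{i-1}]=q_i$ and $[K_i:\mathbb{Q}]=q_1\cdots q_i$.

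To prove $E_1$ has property (N) I would apply Widmer's criterion (Theorem \ref{theo_Widmer}) to $\{K_i\}$. Since $[K_i:K_{i-1}]=q_i$ is prime, the infimum there is over the single field $M=K_i$. The prime $n_i$ is unramified in $K_{i-1}$ (which ramifies only at the $q_j,n_j$ with $j<i$) but totally and tamely ramified in $\mathbb{Q}(n_i^{1/q_i})$, hence in $K_i/K_{i-1}$ above $n_i$, with different exponent $q_i-1$; so $(n_i\mathcal{O}_{K_{i-1}})^{q_i-1}$ divides the relative discriminant $D_{K_i/K_{i-1}}$, whence $N_{K_{i-1}/\mathbb{Q}}(D_{K_i/K_{i-1}})\ge n_i^{(q_i-1)[K_{i-1}:\mathbb{Q}]}$. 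Therefore the quantity in Theorem \ref{theo_Widmer} is at least $n_i^{(q_i-1)/q_i^2}$, which tends to infinity by the choice of $n_i$; hence $E_1$ has (N). The same computation applies verbatim to $E_2$, because $\zeta_{q_i}n_i^{1/q_i}$ is again a root of $x^{q_i}-n_i$, so $n_i$ is again totally tamely ramified in $\mathbb{Q}(\zeta_{q_i}n_i^{1/q_i})$.

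Finally, $E_1E_2=\mathbb{Q}(n_i^{1/q_i},\zeta_{q_i}:i\ge1)$, since $\zeta_{q_i}=(\zeta_{q_i}n_i^{1/q_i})(n_i^{1/q_i})^{-1}\in E_1E_2$; in particular $E_1E_2$ contains $\zeta_q$ for every odd prime $q$. Taking $X=\bigcup_q\mu_q$, the union over the odd primes $q$ of the groups $\mu_q$ of $q$-th roots of unity, and $f(x)=x^2$, which is an admissible polynomial mapping in one variable, we have $X\subseteq E_1E_2$, $X$ infinite, and $f(X)=X$ because $x\mapsto x^2$ permutes $\mu_q$ for odd $q$; so $E_1E_2$ does not have property (P), and the reduction of the first paragraph finishes the proof. (For property (P) alone one could instead take, following Theorem \ref{DvornicichZannier}, $E_1=K$ and $E_2$ a suitable degree-$l$ subfield of $\mathbb{Q}(\zeta_p)$ with $E_1E_2=K'$; but the construction above handles all six properties uniformly.) The point needing care is the (N) verification for $E_1,E_2$: one must check that the towers attain the full degree $q_1\cdots q_i$, so that no intermediate field spoils the infimum in Theorem \ref{theo_Widmer}, and that each newly adjoined prime $n_i$ really forces the stated factor into $D_{K_i/K_{i-1}}$; both are elementary ramification facts, given that the $n_i$ are large primes.
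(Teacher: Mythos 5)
Your proof is correct, and its skeleton is the same as the paper's: exhibit two fields with property (N) whose compositum contains infinitely many roots of unity, kill property (P) for the compositum using $f(x)=x^2$, and then transfer everything through Theorem \ref{propertiesimplications}. The differences are in how the two inputs are obtained. The paper simply cites Corollary 2 of \cite{Widmer} (and the example after Theorem 5 there) for the fields $L_1,L_2$, and then invokes Proposition 3.1 of \cite{Dvo_Zan2} (infinitely many preperiodic points of $x\mapsto x^2$ preclude property (P)); you instead make the construction self-contained, building $E_1=\IQ(n_i^{1/q_i}:i)$ and $E_2=\IQ(\zeta_{q_i}n_i^{1/q_i}:i)$ and verifying (N) directly via Widmer's criterion (Theorem \ref{theo_Widmer}) — your degree, ramification and discriminant estimates, giving the lower bound $n_i^{(q_i-1)/q_i^2}\to\infty$, are all sound — and you avoid the preperiodic-points proposition altogether by noting that $X=\bigcup_q\mu_q$ is an infinite subset of $E_1E_2$ with $f(X)=X$ for the admissible map $f(x)=x^2$. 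So what you have done is essentially reprove the relevant instance of Widmer's Corollary 2 inside the argument; this costs a page of explicit number theory but buys a proof that depends only on Theorem \ref{theo_Widmer} and Theorem \ref{propertiesimplications}, rather than on the two external results the paper quotes.
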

\begin{proof}
Corollary 2 in \cite{Widmer} provides fields $L_1, L_2$ with property (N) whose compositum $L_1L_2$
contains infinitely many roots of unity (see the example right after Theorem 5). By Theorem \ref{propertiesimplications}
we see that $L_1$ and $L_2$ have all of  the properties in question.
Taking $f(x)=x^2$ we conclude moreover, that $L_1L_2$ has infinitely
many preperiodic points under the polynomial mapping $f$. By Proposition 3.1 from \cite{Dvo_Zan2} 
we conclude that $L_1L_2$ does not have property (P). Hence, again by Theorem \ref{propertiesimplications}, it has 
none of the properties (N), (K), (SP), ($\overline{\text{P}}$), (R) and (P).
\end{proof}

\section*{Acknowledgements}
We are indebted to Pierre Liardet who clarified various points regarding polynomial mappings, pointed out important additional references,
and sent us relevant literature.
This work was initiated when the first author visited the Department for Analysis and Computational Number Theory 
at Graz University of Technology. She whould like to thank the Department for the invitation and the financial support.  
The second author was supported by the Austrian Science Fund (FWF): M1222-N13.

\end{document}